\def\OO{\mathcal{O}}
\def\ff{\frak}
\def\Spec{\mbox{\rm Spec}}
\def\inv{{\rm inv}}
\def\cl{\mbox{\rm cl}}
\def\cal{\mathcal}
\def\X{{\ff X}}
\def\supp{\mbox{\rm supp}}
\def\PP{{\mathbb{P}}^1_k}
\def\PPD{{\mathbb{P}}^1_D}
\begin{document}

\title{On the geometry of Pr\"ufer intersections of valuation rings}

\author{Bruce Olberding}

\address{Department of Mathematical Sciences, New Mexico State University,
Las Cruces, NM 88003-8001}

\email{olberdin@nmsu.edu}

\begin{abstract}
Let     $F$ be a field, let $D$ be a subring of $F$ and let $Z$ be  an irreducible subspace of  the space  of all valuation rings between $D$ and $F$ that have quotient field $F$.  Then $Z$ is a locally ringed space whose ring of global sections is $A = \bigcap_{V \in Z}V$. 
All rings between $D$ and $F$ that are integrally closed in $F$ arise in such a way. 
Motivated by applications in areas such as multiplicative ideal theory and real algebraic geometry, a number of authors have formulated criteria for when  $A$ is a Pr\"ufer domain.  
We give    geometric criteria for when $A$ is a Pr\"ufer domain that reduce this issue to questions of prime avoidance. These criteria, which unify and extend a variety of different results in the literature, are framed in terms of morphisms of $Z$ into the projective line $\PPD$.   
\end{abstract}



\subjclass[2010]{Primary 13F05, 13F30; secondary 13B22, 14A15}

\date{}

\newtheorem{theorem}{Theorem}[section]
\newtheorem{lemma}[theorem]{Lemma}
\newtheorem{proposition}[theorem]{Proposition}
\newtheorem{corollary}[theorem]{Corollary}
\newtheorem{example}[theorem]{Example}
\newtheorem{remark}[theorem]{Remark}
\newtheorem{definition}[theorem]{Definition}

\maketitle


\section{Introduction}

  A subring $V$ of a field $F$   is a {\it valuation ring  of $F$} if for each nonzero $x \in F$, $x$ or $x^{-1}$ is in $V$; equivalently,  the ideals of $V$ are linearly ordered by inclusion and $V$ has  quotient field $F$. Although  the ideal theory of valuation rings is
 straightforward,  
    an intersection of valuation rings in $F$ can be quite complicated. Indeed, by a theorem of Krull \cite[Theorem 10.4]{Mat},   every integrally closed subring of  $F$ is an intersection of valuation rings of $F$. 
    In this article, we describe a geometrical approach to determining when an  intersection $A$ of valuation rings of $F$ is a  {\it Pr\"ufer domain},  meaning that  for each prime ideal $P$ of $A$, the localization $A_P$ is a valuation ring of $F$.  Whether an intersection of valuation rings is Pr\"ufer is of consequence in  multiplicative ideal theory, where Pr\"ufer domains are of central importance,  and real algebraic geometry, where the real holomorphy ring is a Pr\"ufer domain that expresses properties of fields involving sums of squares; see the 
 discussion below.  Over the past eighty years,  Pr\"ufer domains have been  extensively studied from ideal-theoretic, homological and module-theoretic points of view; see for example \cite{FHP, FS, G, KZ, LM}. 
  
{Throughout the paper $F$ denotes a field, $D$ is a subring of $F$ that  need not have quotient field $F$, and $Z$ is a subspace of  the {\it Zariski-Riemann space} $\X$ of $F/D$, the space  of all valuation rings of $F$ that contain $D$.}
 The   topology on $\X$ is given by  declaring the basic open sets to be those of the form $\{V \in \X:t_1,\ldots,t_n \in V\}$, where $t_1,\ldots,t_n \in F$. We assume  for technical convenience  that $F \in Z$.  
With this notation fixed, the focus of this article is the {\it holomorphy ring}\footnote{This terminology is due to Roquette \cite[p.~362]{Ro}. Viewing $Z$ as consisting of places rather than valuation rings, the elements of $A$ are precisely the elements of $F$ that have no poles (i.e., do not have value infinity) at the places in $Z$.} 
  $A= \bigcap_{V \in Z}V$ of the subspace $Z$. Such a ring is integrally closed in $F$, and, as noted above, every ring   between $D$ and $F$ that is  integrally closed in $F$ occurs as the holomorphy ring of a subspace of $\X$.   
 In general it is difficult to determine the structure of $A$ from properties of $Z$, topological or otherwise; see \cite{OIrr, ONoeth, OSpringer}, where the  emphasis is on the case in which $D$ is a two-dimensional Noetherian domain with quotient field $F$. In this direction,  there are a number of results  that are concerned with when the holomorphy ring $A$ is a  {Pr\"ufer domain} with quotient field $F$. Geometrically, this is equivalent to $\Spec(A)$ being an affine scheme in $\X$. 
  Moreover, 
    by virtue of the Valuative Criterion for Properness,  $A$ is a Pr\"ufer domain with quotient field $F$ if and only if there are no nontrivial proper birational morphisms into the scheme $\Spec(A)$, an observation that motivates Temkin and Tyomkin's notion of Pr\"ufer algebraic spaces  \cite{Temkin}.  
 
 We show in this article that the morphisms of $Z$ (viewed as a locally ringed space) into the projective line $\PPD$  determine whether the holomorphy ring $A$ of $Z$  is a Pr\"ufer domain. A goal in doing so is to provide a unifying explanation for an interesting variety of results in the literature.  By way of motivation, and because we will refer to them later, we recall these results here.

\smallskip

(1) Perhaps the earliest result in this direction is due to Nagata \cite[(11.11)]{N}: When   $Z$ is finite, then the holomorphy ring $A$ of $Z$ is a Pr\"ufer domain with quotient field $F$.

\smallskip 

(2) Gilmer \cite[Theorem 2.2]{Gi} shows  
that when  $f$ is a nonconstant  monic polynomial over $D$  having no root in $F$ and  each valuation ring in $Z$  contains the set   $S:=\left\{{1}/{f(t)}:t \in F\right\}$, then $A$ is a Pr\"ufer domain with torsion Picard group and quotient field $F$. Rush \cite[Theorem 1.4]{Rush} has since generalized this by allowing the polynomial $f$ to vary with the choice of $t$, but at the (necessary) expense of requiring the rational functions in $S$ to have certain numerators other than $1$.  
  Gilmer was motivated by a special case of this theorem due to Dress  \cite{Dr}, which states that when the field $F$ is formally real (meaning that $-1$ is not a sum of squares), then the subring of $F$ generated by $\left\{{(1+t^2)^{-1}}:t \in F\right\}$ is a Pr\"ufer domain with quotient field $F$ whose set of valuation overrings is precisely the set of valuation rings of $F$ 
 for which $-1$ is not a square in the residue field.  In the literature of real algebraic geometry, the Pr\"ufer domain thus constructed is the {\it real holomorphy ring} of $F/D$. The fact that such rings are Pr\"ufer has a number of interesting consequences for real algebraic geometry and sums of powers of elements of $F$; see for example Becker \cite{Becker} and Sch\"ulting \cite{Sch}. 
 These rings are also the only known source of Pr\"ufer domains having finitely generated ideals that cannot be generated by two elements, as was shown by Sch\"ulting  \cite{Sch79} and  Swan \cite{Swan}; the related literature on this aspect of holomorphy rings is discussed in \cite{OR}.  The notion of existential closure leads to more general results on Pr\"ufer holomorphy rings in function fields. For references  on this generalization, see  \cite{OlbH}.

\smallskip

(3)  Roquette \cite[Theorem 1]{Ro} proves that when there exists a nonconstant monic polynomial $f \in A[T]$ which has no root in the residue field of  $V$ for each valuation ring $V \in Z$ (i.e., the residue fields are ``uniformly algebraically non-closed''), then $A$ is a Pr\"ufer domain with torsion Picard group and quotient field $F$. 
Roquette developed these ideas  as a general explanation for his Principal Ideal Theorem, which states that the ring of totally $p$-integral elements of a formally $p$-adic field is a {\it B\'ezout domain}; that is, every finitely generated ideal is principal \cite[p.~362]{Ro}.  In particular, if there is a bound on the size of the residue fields of the valuation rings in  $Z$, then $A$ is a B\'ezout domain \cite[Theorem 3]{Ro}. 
Motivated by just such a situation, Loper   \cite{Lo} independently  proved similar results  in order to apply them to the ring of integer-valued polynomials of a domain $R$ with quotient field $F$: Int$(R) = \{g(T) \in F[T]:g(R) \subseteq R\}$. 

\smallskip

(4) In \cite{OR} it is 
 shown that when the holomorphy ring $A$ of $Z$ contains a field  of cardinality greater than that of $Z$, then $A$ is a B\'ezout  domain. 
 
 \smallskip

In  this article we offer a  geometric explanation for these results that reduces all the arguments to a question of homogeneous prime avoidance in the projective line $\PPD := {\rm Proj}(D[T_0,T_1])$.  
 Nagata's theorem in (1)  reduces to the observation that a finite set of points of $\PPD$ is contained in an affine open subset of  $\PPD$. The  example in (4) is explained similarly by showing that a ``small'' enough set of points in $\PPD$ is contained in an affine open set. And finally, in cases (2) and (3), the condition on the residue fields guarantees that  the image of each $D$-morphism $Z \rightarrow \PPD$ is contained in the open affine subset $(\PPD)_{g}$, where $g$ is the homogenization of $f$.

 To frame things geometrically, we view $Z$ as a locally ringed space. Its structure sheaf $\OO_Z$ is defined for each nonempty open subset $U$ of $Z$ by $\OO_Z(U) = \bigcap_{V \in Z}V$, while the ring of sections of the empty set is defined to be trivial ring with $0 = 1$; thus $\OO_Z$ is the {\it holomorphy sheaf} of $Z$.  
   The restriction maps on $\OO_Z$ off the empty set are simply set inclusion, and  
  the stalks of $\OO_Z$ are  the valuation rings in $Z$.  
  The standing assumption that $F$ is one of the valuation rings in $Z$ guarantees that $Z$ is an irreducible space; irreducibility in turn guarantees that $\OO_Z$ is a sheaf. 
  (Note that since we are interested in the ring $A = \bigcap_{V \in Z}V$, the  assumption that $F \in Z$ is no limitation.) When considering irreducible  subspaces $Y$ of $\X$, we similarly treat $Y$ as a locally ringed space with structure sheaf defined in this way.

By a {\it morphism} we always mean a morphism in the category of locally ringed spaces. If $X$ and $Y$ are locally ringed spaces with fixed morphisms $\alpha:X \rightarrow \Spec(D)$ and $\beta:Y \rightarrow \Spec(D)$, then  a morphism  $\phi:X \rightarrow Y$ is a {\it $D$-morphism} if $\alpha = \beta \circ \phi$.  
A scheme $X$ is a {\it $D$-scheme} if  a morphism $\phi:X \rightarrow \Spec(D)$ is fixed.  
There is a morphism $\delta=(d,d^\#):Z \rightarrow \Spec(D)$  defined by letting $d$ be the continuous map that sends a valuation ring in $Z$ to its center in $D$, and by letting $d^\#:\OO_{{\rm Spec}(D)} \rightarrow d_*\OO_Z$ be the sheaf morphism defined for each open subset $U$ of $\Spec(D)$ by the set inclusion $d^\#_U:\OO_{{\rm Spec}(D)}(U) \rightarrow \OO_Z(d^{-1}(U))$.  Thus when considering $D$-morphisms from $Z $ to $X$, with $X$ a $D$-scheme, we always assume that the structure morphism $Z \rightarrow \Spec(D)$ is the one defined above.

\section{Morphisms into projective space} 

In this section we describe the $D$-morphisms of $Z$ into projective space by proving an analogue of the fact that morphisms from schemes into projective space are determined by invertible sheaves.  
 Our main technical device in describing such morphisms is the notion of a projective model, as defined in \cite[Chapter VI, \S 17]{ZS}.  Let $t_0,\ldots,t_n $ be nonzero elements of $F$, and for each $i =0,1,\ldots,n$, define $D_i = D[{t_0}/{t_i},\ldots,{t_n}/{t_i}]$ and $U_i = \Spec(D_i)$. 
Then the {\it projective model of $F/D$ defined by $t_0,\ldots,t_n$} is   $X = \{(D_i)_P:P \in \Spec(D_i), i =0,1,\ldots,n\}$.  
The projective model $X$ is a topological space whose basic open sets are of the form $\{R \in X:u_0,\ldots,u_m \in R\}$, where $u_0,\ldots,u_m \in F$, and which is covered by the open subsets $\{(D_i)_P:P \in U_i\}$, $i=0,1,\ldots,n$.  Define a sheaf $\OO_X$ of rings on $X$ for each nonempty open subset $U$ of $X$ by $\OO_X(U) = \bigcap_{R \in U}R$, and let  the ring of sections of the empty set be the trivial ring with $0 = 1$.  Since $X$ is irreducible, $\OO_X$ is a sheaf and hence 
 $(X,\OO_X)$ is a scheme, and in light of the following remark, it is a projective scheme.

\begin{remark} \label{closed immersion remark}
{If $X$ is a projective model defined by $n+1$ elements, then there is a closed immersion $X \rightarrow {\mathbb{P}}^n_D$.}  {\em 
For let $X$ be the projective model defined by $t_0,\ldots,t_n \in F$. 
For each $i = 0,1,\ldots,n$, let $b_i:D[{T_0}/{T_i},\ldots,{T_n}/{T_i}] \rightarrow D_i$ be the $D$-algebra homomorphism that sends ${T_j}/{T_i}$ to ${t_j}/{t_i}$, and let $a_i:\Spec(D_i) \rightarrow \Spec(D[{T_0}/{T_i},\ldots,{T_n}/{T_i}])$ be the induced continuous map of topological spaces. Then the scheme morphisms $(a_i,b_i): \Spec(D_i) \rightarrow \Spec(D[{T_0}/{T_i},\ldots,{T_n}/{T_i}])$ glue together to a morphism $\phi:X \rightarrow {\mathbb{P}}^n_D$ \cite[p.~88]{Hart}, which by virtue of the way it is constructed is a closed immersion \cite[Lemma 01QO]{Stacks}.}\end{remark}

Let $t_0,\ldots,t_n$ be nonzero elements of $F$, and let $X$ be the projective model of $F/D$ defined by $t_0,\ldots,t_n$.  For each valuation ring $V$ in $Z$, there exists $i =0,1,\ldots,n$ such that ${t_j}/{t_i} \in V$ for all $j$, and it follows that 
each valuation ring $V$ in  $Z$ {\it dominates} a unique local ring $R$ in the model $X$, meaning that $R \subseteq V$ and the maximal ideal of $R$ is contained in the maximal ideal of $V$.   
The {\it domination morphism} $\delta=(d,d^\#):Z \rightarrow X$ is defined by letting $d$ be the continuous map that sends a valuation ring in $Z$ to the local ring in $X$ that it dominates, and 
 by letting $d^\#:\OO_{X} \rightarrow d_*\OO_Z$ be the sheaf morphism defined for each open subset $U$ of $Z$ by the set inclusion $d^\#_U:\OO_{X}(U) \rightarrow \OO_Z(d^{-1}(U))$.

 Let  $\gamma:X \rightarrow {\mathbb{P}}^n_D$ be the closed immersion defined in Remark~\ref{closed immersion remark}, and let $\delta:Z \rightarrow X$ denote the domination morphism. Then we say that the $D$-morphism $\gamma \circ \delta$ is the 
 {\it morphism defined by $t_0,\ldots,t_n$}.  We show in Proposition~\ref{factors} that 
 each $D$-morphism $Z \rightarrow {\mathbb{P}}^n_D$ arises in this way. Our standing assumption that $F \in Z$ is used in a strong way here, in that the proposition relies on a lemma which shows that the $D$-morphisms from $Z$ into projective space are calibrated by the inclusion morphism $\Spec(F) \rightarrow Z$.


\begin{lemma} \label{generic point} Let  $\iota:\Spec(F) \rightarrow Z$ be the canonical morphism, let  $\phi=(f,f^\#):Z \rightarrow X$ and $\gamma=(g,g^\#):Z \rightarrow X$ be morphisms of locally ringed spaces, where $X$ is a separated scheme, and let $\eta = f(F)$.  Then $\phi = \gamma$ if and only if $\phi \circ \iota = \gamma \circ \iota$; if and only if 
$\eta= f(F) = g(F)$ and $f^\#_\eta = g^\#_\eta$.  
\end{lemma}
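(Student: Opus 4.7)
The plan is to prove the two biconditionals in sequence, leveraging the fact that $F$ is a generic point of $Z$ (it lies in every nonempty basic open set $\{V : t_1,\ldots,t_n \in V\}$), with stalk $\OO_{Z,F} = F$. Consequently, $\iota$ sends the unique point of $\Spec(F)$ to $F$ and induces the identity on the stalk $F$.

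For the second equivalence, I would unpack $\phi \circ \iota$: its underlying map sends the unique point of $\Spec(F)$ to $f(F) = \eta$, and its stalk map there is the composite of $f^\#_\eta$ with $\mathrm{id}_F$, i.e., just $f^\#_\eta$. Since a morphism $\Spec(F) \to X$ is determined by its image point together with the induced local ring map from the stalk of $\OO_X$ at that point into $F$, the equality $\phi \circ \iota = \gamma \circ \iota$ is equivalent to $f(F) = g(F) = \eta$ together with $f^\#_\eta = g^\#_\eta$.

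For the first equivalence, the forward direction is immediate. For the converse, I will apply the Valuative Criterion for Separatedness pointwise. Fix $V \in Z$; since $\OO_{Z, V} = V$, the local homomorphism $f^\#_V: \OO_{X, f(V)} \to V$ yields a morphism $\psi_f: \Spec(V) \to X$ (through $\Spec(V) \to \Spec(\OO_{X, f(V)}) \to X$), and similarly $g^\#_V$ yields $\psi_g: \Spec(V) \to X$. The key verification is that both $\psi_f$ and $\psi_g$ restrict to the common morphism $\phi \circ \iota = \gamma \circ \iota : \Spec(F) \to X$ along the inclusion $\Spec(F) \hookrightarrow \Spec(V)$; this uses functoriality of stalks along the generalization from $V$ to $F$ in $Z$, which identifies the composite $\OO_{X, f(V)} \to V \hookrightarrow F$ with $\OO_{X, f(V)} \to \OO_{X, \eta} \xrightarrow{f^\#_\eta} F$. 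Since $X$ is separated, the Valuative Criterion forces $\psi_f = \psi_g$. Reading off the image point and stalk map at the closed point of $\Spec(V)$ gives $f(V) = g(V)$ and $f^\#_V = g^\#_V$ for each $V \in Z$; the sheaf condition on $\OO_Z$ then promotes stalkwise equality of $f^\#$ and $g^\#$ to equality of sheaf morphisms, so $\phi = \gamma$.

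The main obstacle is the diagram chase verifying that $\psi_f$ and $\psi_g$ genuinely extend the common morphism on $\Spec(F)$; once this is confirmed, the Valuative Criterion for Separatedness closes the argument without further effort.
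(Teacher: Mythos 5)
Your argument is correct, but it takes a genuinely different route from the paper's. The paper never leaves the category of locally ringed spaces: it fixes an affine open $U \ni \eta$, uses the adjunction ${\rm Hom}_{\rm LRS}(Y,\Spec(R)) \cong {\rm Hom}(R,\OO_Y(Y))$ to reduce the equality of $\phi$ and $\gamma$ over $U$ to the equality of the ring maps $f^\#_U$ and $g^\#_U$, observes that these are forced to agree because every ring of sections of $\OO_Z$ includes into $\OO_{Z,F}=F$ (so $f^\#_U$ is already determined by $f^\#_\eta$), and then covers $Z$ by the sets $f^{-1}(U_i)$ as $U_i$ runs over the affine opens containing $\eta$. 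You instead work one stalk at a time, packaging each local homomorphism $f^\#_V:\OO_{X,f(V)}\rightarrow V$ as a morphism $\Spec(V)\rightarrow X$ and invoking the uniqueness half of the valuative criterion (which does hold for separated morphisms with no Noetherian or quasi-compactness hypotheses). What your approach buys is that the separatedness of $X$ is used explicitly and in exactly the form one expects in this valuation-theoretic setting, whereas in the paper's proof it is absorbed into the claim that $\phi|_Y=\gamma|_Y$ can be tested on $f^\#_U$ versus $g^\#_U$. What the paper's approach buys is economy: it obtains the equality of the topological maps and of the sheaf maps in one stroke, while you must separately (i) carry out the generization diagram chase identifying $\psi_f|_{\Spec(F)}$ with $\phi\circ\iota$ --- which you rightly flag as the crux, and which succeeds because $f^\#_\eta$ is a local map into the field $F$, so its kernel is exactly ${\ff m}_\eta$ --- and (ii) pass from $f^\#_V=g^\#_V$ for all $V$ back to $f^\#=g^\#$ as maps $\OO_X\rightarrow f_*\OO_Z$; for (ii) be slightly careful that the stalks of $f_*\OO_Z$ are not the rings $\OO_{Z,V}$, though the step is harmless here because every ring of sections of $\OO_Z$ injects into $F$, so $f^\#_U$ is again pinned down by $f^\#_\eta$ once you know $f^{-1}(U)=g^{-1}(U)$.
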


\begin{proof}  
Suppose that $\eta= f(F) = g(F)$ and  $f^\#_\eta = g^\#_\eta$. 
 Let $U$ be an affine open subset of $X$ containing $\eta$, and let $Y = f^{-1}(U)$. Then $Y$ is a locally ringed space with structure sheaf $\OO_Y$ defined for each open set $W$ in $Y$ by $\OO_Y(W) = \OO_Z(W)$.  
  We claim that $\phi|_Y = \gamma|_Y$.  Since $U$ is affine and $Y$ is a locally ringed space, the morphisms $\phi|_Y$ and $\gamma|_Y$ are equal if and only if $f^\#_U = g^\#_U$ \cite[Theorem 10.8, p.~200]{Holme}.  Now since $\OO_{Z}(Y) \subseteq \OO_{Z,F} = F$ and the restriction maps on the sheaf $\OO_Z$ are set inclusions, 
   we have that for each $s \in \OO_X(U)$,    $f^\#_U(s) = f^\#_\eta(s) = g^\#_\eta(s) = g^\#_U(s)$. Thus $f^\#_U = g^\#_U$, and  hence $\phi|_Y = \gamma|_Y$.   Finally, let $\{U_i\}$ be the collection of all affine open subsets of $X$ that contain $\eta$.  Then $\{f^{-1}(U_i)\}$ is a cover of $Z$, and we have shown that $\phi$ and $\gamma$ restrict to the same morphism on each of these open sets, so we conclude that $\phi = \gamma$. It is straightforward to verify that $\phi \circ \iota = \gamma \circ \iota$ if and only if 
$f(F) = g(F)$ and $f^\#_\eta = g^\#_\eta$, so the lemma follows. 
\end{proof}

\begin{proposition} \label{factors}  If $\phi:Z \rightarrow {\mathbb{P}}^{n}_D$ is a $D$-morphism, then there exist $t_0\ldots,t_n \in F$ such that $\phi$ is defined by $t_0,\ldots,t_n$.  
\end{proposition}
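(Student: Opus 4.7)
The plan is to extract $t_0,\ldots,t_n \in F$ from the image of the generic point of $Z$ under $\phi$, and then verify via Lemma~\ref{generic point} that the morphism defined by this tuple coincides with $\phi$.

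First I would let $\iota: \Spec(F) \to Z$ denote the canonical morphism and examine the composite $\phi \circ \iota: \Spec(F) \to {\mathbb{P}}^n_D$. This composite sends the unique point of $\Spec(F)$ to some $\eta \in {\mathbb{P}}^n_D$. Choose an index $i$ so that $\eta$ lies in the standard affine chart $U_i = \Spec(D[T_0/T_i,\ldots,T_n/T_i])$, and let $b: D[T_0/T_i,\ldots,T_n/T_i] \to F$ be the corresponding ring homomorphism on sections. Define $t_i = 1$ and $t_j = b(T_j/T_i) \in F$ for $j \neq i$, so that $t_j/t_i = b(T_j/T_i)$ for each $j$.

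Next, I would form the projective model $X$ of $F/D$ defined by $t_0,\ldots,t_n$, together with the closed immersion $\gamma: X \to {\mathbb{P}}^n_D$ of Remark~\ref{closed immersion remark} and the domination morphism $\delta: Z \to X$. Setting $\psi = \gamma \circ \delta$, this $\psi$ is by construction the morphism defined by $t_0,\ldots,t_n$. To identify $\psi$ with $\phi$, I would apply Lemma~\ref{generic point}, whose hypotheses are satisfied because ${\mathbb{P}}^n_D$ is separated: it suffices to verify $\psi \circ \iota = \phi \circ \iota$. The generic point $F \in Z$ dominates the fraction field of $D_i$, which sits in $X$ as the generic local ring of the affine piece indexed by $i$, and $\gamma$ carries it to $\eta$. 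The induced stalk map $\OO_{{\mathbb{P}}^n_D,\eta} \to F$ sends each $T_j/T_i$ to $t_j/t_i = b(T_j/T_i)$, matching the stalk map coming from $\phi \circ \iota$ by our choice of $t_j$. The lemma then forces $\psi = \phi$.

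The principal obstacle is that the definition of a projective model requires all $t_j$ to be nonzero, whereas if $\eta$ lies on a coordinate hyperplane then $b(T_j/T_i) = 0$ is forced for some $j$. The natural remedy is to discard the indices $j$ for which $b(T_j/T_i) = 0$, form a projective model in the smaller projective space ${\mathbb{P}}^m_D$ using the surviving $t_j$'s, and post-compose $\gamma$ with the standard closed immersion ${\mathbb{P}}^m_D \hookrightarrow {\mathbb{P}}^n_D$ that inserts zeros in the dropped coordinates. The stalk-level comparison at the generic point goes through unchanged, so the conclusion is the same.
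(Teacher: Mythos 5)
Your argument is correct and is essentially the paper's own proof: both read off the tuple $(t_0,\ldots,t_n)$ from the stalk map of $\phi$ at the image $\eta$ of the generic point $F$ (the paper extracts it by citing the proof of Hartshorne, Theorem II.7.1), form the projective model this tuple defines together with its closed immersion into ${\mathbb{P}}^n_D$, and reduce the identification of the two morphisms to a comparison of stalk maps at the generic point via Lemma~\ref{generic point}. The degenerate case you isolate at the end --- $\eta$ lying on a coordinate hyperplane, so that some $t_j$ is forced to equal $0$ and the tuple does not literally define a projective model in the sense of Section 2 --- is genuine and is passed over in silence in the paper's proof; your remedy of discarding the vanishing coordinates and post-composing with the linear closed immersion ${\mathbb{P}}^m_D \hookrightarrow {\mathbb{P}}^n_D$ is the right way to handle it.
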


\begin{proof} Write $\phi = (f,f^\#)$,  let $\eta = f(F)$, and let $S = {\mathbb{P}}^n_D = {\rm Proj}(D[T_0,\ldots,T_n])$. 
 For each $i =0,\ldots,n$, let $U_i $ be the open affine set $ S_{T_i},$ so that $S = U_0 \cup \cdots \cup U_n$.   
 Let $\alpha=(a,a^\#):\Spec(F) \rightarrow S$
 be the composition of $\phi$ with the canonical morphism $\Spec(F) \rightarrow Z$, and note that for each $i$,  $a^\#_{U_i}(s) = f^\#_{S,\eta}(s)$ for all $s \in \OO_{S}(U_i)$.  
  Since  
$\alpha$ 
  is a morphism of schemes into projective $n$-space over $D$, there exist $t_0,\ldots,t_n \in F$ such that for each $i,j$, $f^\#_{U_i}\left({T_j}/{T_i}\right) = {t_j}/{t_i}$; see the proof of \cite[Theorem II.7.1, p.~150]{Hart}.  Let $X$ be the projective model of $F/D$ defined by $t_0,\ldots,t_n$. Then $t_0,\ldots,t_n$ can be viewed as global sections of   an invertible sheaf on $X$ that is the image of the twisting sheaf $\OO(1)$ of $S$.  There is then by \cite[Theorem 7.1, p.~150]{Hart} and its proof a unique $D$-morphism $\gamma = (g,g^\#):X \rightarrow S$ such that  
   $g^\#_{U} = f^\#_{U}$ for each open set $U$ of $S$ and   $g:X \rightarrow S$ is  
 the continuous map that for each $i =0,\ldots,n$ sends the equivalence class of  a prime ideal $P$ in $\Spec(D[{t_0}/{t_i},\ldots,{t_n}/{t_i}]) \subseteq X$ to the equivalence class of the  prime ideal $(f^\#_{U_i})^{-1}(P)$ in $U_i=\Spec(D[{T_0}/{T_i},\ldots,{T_n}/{T_i}])$.   
 Then, with $\delta=(d,d^\#):Z \rightarrow X$ the domination morphism, $\gamma \circ \delta:Z \rightarrow S$ is a $D$-morphism. Moreover, $g(d(F)) = g(F) = \eta = f(F)$ and (viewing $F$ as a point in both $X$ and $Z$), $(d^\# \circ g^\#)_{F} = d^\#_{F} \circ g^\#_{\eta} = f^\#_{\eta}$. Therefore, by Lemma~\ref{generic point}, $\phi = \gamma \circ \delta$. 
\end{proof}

         \begin{corollary} \label{morphism cor}
 Every $D$-morphism $\phi:Z \rightarrow {\mathbb{P}}^{n}_D$ lifts 
 to  a unique $D$-morphism $\widetilde{\phi}:\X \rightarrow {\mathbb{P}}^{n}_D$.  
\end{corollary}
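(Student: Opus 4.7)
The plan is to lift $\phi$ by extending the domination morphism from $Z$ to all of $\mathfrak{X}$, and then to derive uniqueness from Lemma~\ref{generic point}.

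First, apply Proposition~\ref{factors} to $\phi$ to obtain elements $t_0,\ldots,t_n \in F$ such that $\phi = \gamma \circ \delta$, where $X$ is the projective model of $F/D$ defined by $t_0,\ldots,t_n$, $\gamma:X \rightarrow {\mathbb{P}}^n_D$ is the closed immersion of Remark~\ref{closed immersion remark}, and $\delta:Z \rightarrow X$ is the domination morphism. The key observation is that the construction of the domination morphism uses nothing special about $Z$: every valuation ring $V \in \mathfrak{X}$ contains $D$ and hence, for some $i$, contains all ratios $t_j/t_i$, so $V$ dominates a unique local ring of $X$. Consequently there is a domination morphism $\widetilde{\delta}:\mathfrak{X} \rightarrow X$ defined exactly as $\delta$ was (continuous map sending $V$ to the local ring of $X$ it dominates, sheaf morphism given by set inclusion on sections). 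By construction $\widetilde{\delta}$ restricts to $\delta$ on $Z$, both at the level of continuous maps and at the level of structure sheaves, since the restriction maps of $\OO_{\mathfrak{X}}$ and $\OO_Z$ are set inclusions within $F$.

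Next, define $\widetilde{\phi} = \gamma \circ \widetilde{\delta}: \mathfrak{X} \rightarrow {\mathbb{P}}^n_D$. This is a $D$-morphism because $\gamma$ is a $D$-morphism (it arises from $D$-algebra maps) and $\widetilde{\delta}$ is a $D$-morphism (the center in $D$ of a valuation ring $V$ coincides with the center in $D$ of the local ring it dominates in $X$). Since $\widetilde{\delta}|_Z = \delta$, we obtain $\widetilde{\phi}|_Z = \gamma \circ \delta = \phi$, which gives the existence of the lift.

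For uniqueness, suppose $\widetilde{\phi}_1,\widetilde{\phi}_2:\mathfrak{X} \rightarrow {\mathbb{P}}^n_D$ are both $D$-morphisms extending $\phi$. The scheme ${\mathbb{P}}^n_D$ is separated over $D$, hence separated as a scheme, so Lemma~\ref{generic point} applies with $\mathfrak{X}$ in place of $Z$ (the proof of that lemma uses only that $F$ is the generic stalk and restriction maps are inclusions, which holds for $\mathfrak{X}$ as well). Thus it suffices to verify that $\widetilde{\phi}_1$ and $\widetilde{\phi}_2$ agree after composition with the canonical morphism $\Spec(F) \rightarrow \mathfrak{X}$. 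But this composition factors through the inclusion $\Spec(F) \rightarrow Z \rightarrow \mathfrak{X}$ (since $F \in Z$), and the composite $\Spec(F) \rightarrow Z \rightarrow \mathfrak{X} \rightarrow {\mathbb{P}}^n_D$ is the same for both $\widetilde{\phi}_i$ because both restrict to $\phi$ on $Z$. Hence $\widetilde{\phi}_1 = \widetilde{\phi}_2$.

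The only delicate point is the uniqueness step: one must confirm that Lemma~\ref{generic point} is truly available for $\mathfrak{X}$ (which requires checking that the argument in that lemma, written for $Z$, transfers verbatim to $\mathfrak{X}$) and that ${\mathbb{P}}^n_D$ is separated so the lemma's hypothesis is satisfied. Both are routine, and the construction of $\widetilde{\delta}$ is a straightforward extension, so the proof should proceed as outlined.
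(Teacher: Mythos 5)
Your proof is correct and follows essentially the same route as the paper: factor $\phi$ through a projective model via Proposition~\ref{factors}, extend the domination morphism to all of $\X$ (every valuation ring in $\X$ dominates the model), and deduce uniqueness from Lemma~\ref{generic point} via composition with $\Spec(F) \rightarrow \X$. Your one ``delicate point'' is in fact immediate, since $\X$ is itself a subspace of the Zariski--Riemann space containing $F$, so Lemma~\ref{generic point} applies to it verbatim.
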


\begin{proof} 
 Let $\phi:Z \rightarrow {\mathbb{P}}^n_D$ be a $D$-morphism. Then by Proposition~\ref{factors} there exists a projective model $X$ of $F/D$ and a $D$-morphism $\gamma:X \rightarrow {\mathbb{P}}^{n}_D$ such that $\phi = \gamma \circ \delta|_{Z}$, where $\delta:Z \rightarrow X$ is the domination map. Since $X$ is a projective model of $F/D$, each valuation ring in $\X$ dominates $X$, and hence $\delta:Z \rightarrow X$ extends to the domination morphism $\widetilde{\delta}:\X \rightarrow X$.  
Thus $\widetilde{\phi} = \gamma \circ \widetilde{\delta}$ lifts $\phi$. If  there is another morphism $\psi:\X \rightarrow {\mathbb{P}}^n_D$ that lifts $\phi$, then with  $\iota:\Spec(F) \rightarrow Z$  the canonical morphism, $\psi \circ \iota = \phi \circ \iota = {\widetilde{\phi}} \circ \iota$, so that by  Lemma~\ref{generic point}, $\psi= \widetilde{\phi}$.   
\end{proof}

\begin{remark} {\em By Lemma~\ref{generic point}, the
 $D$-morphisms $Z \rightarrow {\mathbb{P}}^n_D$ are determined by their composition with the morphism $\Spec(F) \rightarrow {\mathbb{P}}^n_D$.  Conversely, by Corollary~\ref{morphism cor}, each $D$-morphism $\Spec(F) \rightarrow Z$ lifts to a unique morphism $Z \rightarrow \X$. Thus the $D$-morphisms $Z \rightarrow {\mathbb{P}}^n_D$ are in one-to-one correspondence with the 
  $F$-valued points of ${\mathbb{P}}^n_D$.}  
        \end{remark}
  
 \section{A geometrical characterization of Pr\"ufer domains}

We show in this section  that if $Z$ has the property that the image of every $D$-morphism $Z \rightarrow \PPD$ of locally ringed spaces factors through an affine scheme, then the holomorphy ring $A$  of $Z$ is a Pr\"ufer domain. A special case in which this is satisfied is when  there is a   homogeneous polynomial $f(T_0,T_1)$ of positive degree $d$ such that the image of each such morphism is contained in $(\PPD)_f$. In this 
case, we show that the Pr\"ufer domain $A$ has torsion Picard group. 
 
\begin{theorem} \label{big affine} The ring $A$ is a Pr\"ufer domain with quotient field $F$ if and only if every 
$D$-morphism $Z \rightarrow \PPD$ factors through an affine scheme.
\end{theorem}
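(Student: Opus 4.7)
The strategy is to translate ``factors through an affine scheme'' into invertibility of two-generated fractional ideals of $A$ in $F$, leveraging the classification of morphisms to projective space by invertible sheaves with generating sections \cite[Theorem II.7.1, p.~150]{Hart}, Proposition~\ref{factors} (every $D$-morphism $Z \to \PPD$ is defined by some nonzero $t_0,t_1 \in F$), and Lemma~\ref{generic point} (morphisms into a separated scheme agree once they agree at the generic point).

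For the forward direction, assume $A$ is Pr\"ufer with quotient field $F$ and that $\phi:Z \to \PPD$ is defined by nonzero $t_0,t_1 \in F$. The fractional ideal $I = At_0 + At_1$ is invertible, so its sheaf $\widetilde{I}$ on $\Spec A$ is an invertible $\OO_{\Spec A}$-module globally generated by $t_0,t_1$; this data produces a $D$-morphism $\psi:\Spec A \to \PPD$. Composing with the canonical morphism $\iota:Z \to \Spec A$ (arising from $A = \OO_Z(Z)$) yields a $D$-morphism $\psi \circ \iota:Z \to \PPD$. Both $\phi$ and $\psi \circ \iota$ send $\Spec F$ to the $F$-valued point $(t_0:t_1)$ of $\PPD$ with the same induced stalk map, so Lemma~\ref{generic point} gives $\phi = \psi \circ \iota$, factoring $\phi$ through the affine scheme $\Spec A$.

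For the backward direction, assume every $D$-morphism $Z \to \PPD$ factors through an affine scheme, and fix nonzero $t_0,t_1 \in F$. Factor the associated $\phi$ as $Z \xrightarrow{h} Y \xrightarrow{g} \PPD$ with $Y = \Spec B$ affine. The morphism $g$ corresponds to an invertible $B$-module $M$ with two sections $s_0,s_1$ generating each stalk, and since $Y$ is affine, $M = Bs_0 + Bs_1$. Pulling back via the ring map $B \to A$ that $h$ induces, we obtain an invertible $A$-module $N = A \otimes_B M$ generated by $1 \otimes s_0, 1 \otimes s_1$. Flatness of $N$ together with $A \subseteq F$ embeds $N$ into the one-dimensional $F$-space $N \otimes_A F$; fixing the trivialization of this space for which $1 \otimes s_i \mapsto t_i$ (the generic trivialization of $\phi^*\OO_\PPD(1)$ coming from ``$\phi$ is defined by $t_0, t_1$'') identifies $N$ with the fractional ideal $At_0 + At_1 \subseteq F$, which is therefore invertible. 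Applying this with $(t_0,t_1)=(1,x)$ for any nonzero $x \in F$, the inverse $A \cap x^{-1}A$ of $A + xA$ must be nonzero, and any nonzero $y$ in it satisfies $y, xy \in A \setminus \{0\}$, whence $x = (xy)/y$ lies in the quotient field of $A$. Hence $F$ is the quotient field of $A$, and since every two-generated ideal of $A$ is invertible, $A$ is Pr\"ufer.

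The main technical point is the identification of trivializations in the backward direction: that the isomorphism $N \otimes_A F \cong F$ extracted from the factorization $\phi = g \circ h$ agrees with the one dictated by ``$\phi$ is defined by $t_0, t_1$'' (which goes through the projective model of $F/D$ of Proposition~\ref{factors}), so that $1 \otimes s_i$ really maps to $t_i$ and not merely to some $F^*$-multiple. This compatibility ultimately reduces to yet another application of Lemma~\ref{generic point} to the invertible sheaf $\phi^*\OO_\PPD(1)$ at the generic point.
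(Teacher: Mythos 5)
Your proof is correct, but it takes a genuinely different route from the paper's, most visibly in the backward direction. The paper's proof of that direction never mentions invertible sheaves: given $0\neq t\in F$ and a prime $P$ of $A$, it takes the morphism $\phi$ defined by $1,t$, factors it as $Z\rightarrow \Spec(R)\rightarrow \PPD$, observes $R\subseteq A$, and simply checks whether the image of $Q=R\cap P$ lands in $(\PPD)_{T_0}$ or $(\PPD)_{T_1}$; since the stalk maps are set inclusions, this puts $t$ or $t^{-1}$ in $R_Q\subseteq A_P$ directly. Your version instead pulls back $\OO_{\PPD}(1)$ through the affine factorization to conclude that $(t_0,t_1)A$ is an invertible $A$-module, and then localizes. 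This is heavier machinery and forces you to confront the trivialization issue you flag at the end (which is genuine but harmless: the embedding $N\hookrightarrow N\otimes_AF\cong F$ is canonical only up to a scalar in $F^{*}$, but scaling preserves invertibility, and the ratio $(1\otimes s_1)/(1\otimes s_0)=t_1/t_0$ is pinned down by comparing the two descriptions of the composite $\Spec(F)\rightarrow \PPD$ at the generic point --- the same use of the generic point the paper makes via Lemma~\ref{generic point}). In exchange, your argument delivers more: it shows outright that every two-generated fractional ideal of $A$ is invertible, a fact the paper only recovers later through Lemma~\ref{power} and Theorem~\ref{big affine 2} when discussing Picard groups. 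In the forward direction the two arguments both factor $\phi$ through $\Spec(A)$, but again by different means: the paper uses that the localizations of a Pr\"ufer domain are valuation rings and hence dominate the projective model $X$, so the domination map $Z\rightarrow X$ factors through $\Spec(A)$; you construct $\Spec(A)\rightarrow\PPD$ from the invertible ideal $(t_0,t_1)A$ via the classification of morphisms to projective space and then invoke Lemma~\ref{generic point} to identify the composite with $\phi$. Both are valid; the paper's is the more elementary and self-contained, yours the more transparently geometric.
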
 
\begin{proof}
Suppose $A$ is a Pr\"ufer domain, and let $\phi:Z \rightarrow \PPD$ be a $D$-morphism. 
By Proposition~\ref{factors}, there exists a projective model $X$ of $F/D$ and a $D$-morphism $\gamma:X \rightarrow \PPD$ such that $\phi = \gamma \circ \delta$, where $\delta:Z \rightarrow X$ is the domination morphism. 
  Since  $A$ is a Pr\"ufer domain with quotient field $F$,  every localization of $A$
 is a valuation  domain and hence dominates a local ring in  $X$. Since every valuation ring in $Z$ contains $A$, it follows 
  that $\phi$ factors through the affine scheme $\Spec(A)$.   

Conversely, suppose that every $D$-morphism $Z \rightarrow \PPD$ factors through an affine scheme. Let   $P$ be a prime ideal of $A$.  To prove that $A_P$ is a valuation domain with quotient field $F$, it suffices to show that for each $0 \ne t \in F$, $t \in A_P$ or $t^{-1} \in A_P$. Let $0 \ne t \in F$, and let $X$  be the projective model of $F/D$ defined by $1,t$. Then by Remark~\ref{closed immersion remark} there is a closed immersion of $X$ into $\PPD$. Let $\phi=(f,f^\#):Z \rightarrow \PPD$ be the  $D$-morphism that results from composing this closed immersion with  the domination morphism $Z \rightarrow X$. In particular, with $\nu = f(F)$, we have $f^\#_{\nu}(T_1/T_0) = t$ and $f^\#_{\nu}(T_0/T_1) = t^{-1}$.  

 By assumption  there is a ring $R$ and $D$-morphisms $\delta=(d,d^\#):Z \rightarrow \Spec(R)$ and $\gamma=(g,g^\#):\Spec(R) \rightarrow \PPD$ such that $\phi = \gamma \circ \delta$.  By replacing $R$ with its  image in $F$ under $d^\#_{\eta}$, where $\eta = d(F)$, we may assume by Lemma~\ref{generic point} that $R$ is a subring of $F$ and that $\delta$ is the domination morphism. Then since $R$ is the ring of global sections of $\Spec(R)$ and $A$ is the ring of global sections of $Z$, it follows that 
  $R \subseteq A$, and hence  $Q = R \cap P$ is a prime ideal of $R$.   Let $x = g(Q)$. Then $x  
\in (\PPD)_{T_0}$ or $x \in (\PPD)_{T_1}$. In the former case, $f^\#_{x}(T_1/T_0) = t$, and in the latter, $f^\#_{x}(T_0/T_1) = t^{-1}$.  But  $f^\# = d^\# \circ g^\#$ and $d^\#$ restricts on each nonempty open subset of $\Spec(R)$ to the inclusion mapping, so either $x \in (\PPD)_{T_0}$, so that  $t = f^\#_x(T_1/T_0) =  g^\#_x(T_1/T_0) \in R_Q \subseteq A_P$, or $x \in (\PPD)_{T_1}$, so that   $t^{-1} = f^\#_x(T_0/T_1) =  g^\#_x(T_0/T_1) \in R_Q \subseteq A_P$. This proves that $A$ is a Pr\"ufer domain with quotient field $F$.  
\end{proof} 

Nagata's theorem discussed in (1) of the introduction follows then from Prime Avoidance:

\begin{corollary} {\em (Nagata \cite[11.11]{N})} If $Z$ is a finite set, then $A$ is a Pr\"ufer domain with quotient field $F$. 
\end{corollary}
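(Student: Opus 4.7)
The plan is to apply Theorem~\ref{big affine}: since $Z$ is finite, it will suffice to show that every $D$-morphism $\phi = (f, f^\#) : Z \to \PPD$ factors through an affine open subscheme of $\PPD$.

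For such a $\phi$, the image $f(Z)$ is a finite subset of $\PPD$. Each point $x \in f(Z)$ corresponds to a homogeneous prime ideal $P_x$ of $D[T_0, T_1]$ that does not contain the irrelevant ideal $(T_0, T_1)$, since $x$ is a point of ${\rm Proj}$. I would then invoke homogeneous prime avoidance on the homogeneous ideal $(T_0, T_1)$ against the finite family of homogeneous primes $\{P_x : x \in f(Z)\}$: none of these primes contains $(T_0, T_1)$, so there exists a homogeneous element $h \in (T_0, T_1)$ lying outside each $P_x$. Because $h$ is a homogeneous element of the irrelevant ideal, it has positive degree, so $(\PPD)_h$ is an affine open subscheme of $\PPD$ containing every point of $f(Z)$.

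Consequently $\phi$ restricts to a morphism $Z \to (\PPD)_h$ whose composition with the open immersion $(\PPD)_h \hookrightarrow \PPD$ recovers $\phi$, so $\phi$ factors through the affine scheme $(\PPD)_h$. Theorem~\ref{big affine} then yields that $A$ is a Pr\"ufer domain with quotient field $F$. The only nontrivial ingredient is the homogeneous prime avoidance step, but this is a standard lemma in graded ring theory, and the finiteness of $Z$ (and hence of $f(Z)$) is precisely what makes the avoidance argument applicable.
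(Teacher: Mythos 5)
Your proposal is correct and follows the paper's argument essentially verbatim: finiteness of the image, Homogeneous Prime Avoidance applied to the irrelevant ideal $(T_0,T_1)$ against the finitely many homogeneous primes corresponding to points of the image, affineness of $(\PPD)_h$, and then Theorem~\ref{big affine}. No gaps.
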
 

\begin{proof}  Let $\phi:Z \rightarrow \PPD$ be a $D$-morphism. Then the image of $\phi$ in $\PPD$ is finite, so by Homogeneous Prime Avoidance \cite[Lemma 1.5.10]{BH}, there exists a homogeneous polynomial $f$ (necessarily of positive degree) in the irrelevant ideal  $(T_0,T_1)$ of $D[T_0,T_1]$ such that $f$ is not in the union of the finitely many homogeneous prime ideals  corresponding to the image of $Z$ in $\PPD$; i.e., the image of $\phi$ is contained in $(\PPD)_f$.  This subset is affine \cite[Exercise III.10, p.~99]{EH}, so 
 by Theorem~\ref{big affine}, $A$ is a Pr\"ufer domain with quotient field $F$. \end{proof}

In fact, when $Z$ is finite, then $A$ is a B\'ezout domain: If $M$ is a maximal ideal of $A$, then $A_M$ is a valuation domain, but since $Z$ is finite, $A_M = \bigcap_{V \in Z}VA_M$, which since $A_M$ is a valuation domain, forces $A_M = V$ for some $V \in Z$. Therefore, $A$ has only finitely many maximal ideals, so that every invertible ideal is principal, and hence $A$ is a B\'ezout domain.

In Theorem~\ref{big affine 2}, we give a criterion  for when $A$ is  a Pr\"ufer domain with torsion Picard group. In this case, the $D$-morphisms $Z \rightarrow \PPD$ not only factor through an affine scheme, but have image in an affine open subscheme of $\PPD$.  
For lack of a precise reference, we note the following standard observation.

\begin{lemma} \label{exercise} Let $X$ be a projective model of $F/D$ defined by $t_0,\ldots,t_n \in F$, and let $f(T_0,\ldots,T_n) \in D[T_0,\ldots,T_n]$ be homogeneous of positive degree $d$ such that $f(t_0,\ldots,t_n) \ne 0$.  Let  $$R = \{0\} \cup  \left\{\frac{h(t_0,\ldots,t_n)}{f(t_0,\ldots,t_n)^e}:e \geq 0 {\mbox{ and }} h {\mbox{ is a homogeneous form of degree }} de\right\}.$$ Then $\{R_P:P \in \Spec(R)\}$ is an open  affine subset of $X$.  
%
\end{lemma}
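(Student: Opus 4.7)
The plan is to realize $\{R_P : P \in \Spec(R)\}$ as the preimage along the closed immersion $\gamma : X \hookrightarrow {\mathbb{P}}^n_D$ of Remark~\ref{closed immersion remark} of the standard affine open subset $({\mathbb{P}}^n_D)_f$, where $f$ is viewed as a global section of $\OO_{{\mathbb{P}}^n_D}(d)$. Since $({\mathbb{P}}^n_D)_f = \Spec(S[1/f]_0)$ (with $S = D[T_0,\ldots,T_n]$) is affine because $f$ has positive degree, and since the preimage of an affine open under a closed immersion is affine, the subset $Y := \gamma^{-1}(({\mathbb{P}}^n_D)_f)$ is automatically an affine open subscheme of $X$ as soon as the identification with the prescribed set is made.

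First I would work chart by chart. Setting $f_i := f(t_0/t_i,\ldots,t_n/t_i) = f(t_0,\ldots,t_n)/t_i^d \in D_i$, the restriction of $\gamma$ to $U_i = \Spec(D_i)$ is induced by the surjective $D$-algebra homomorphism $D[T_0/T_i,\ldots,T_n/T_i] \twoheadrightarrow D_i$ sending $T_j/T_i$ to $t_j/t_i$, and this carries the generator $f/T_i^d$ of the degree-zero part of $S[1/f]$ on the $T_i$-chart to $f_i$. Hence $Y \cap U_i = D(f_i) = \Spec((D_i)_{f_i})$, and $Y = \bigcup_i D(f_i)$ is an open subset of $X$. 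Next, because $\gamma$ is a closed immersion, the coordinate ring of the affine scheme $Y$ is the image of $S[1/f]_0$ under the ring map induced by $\gamma^\#$, which sends $T_j$ to $t_j$. Since $S[1/f]_0$ consists exactly of fractions $h/f^e$ with $h$ homogeneous of degree $de$, this image is precisely $R$, and therefore $Y \cong \Spec(R)$.

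To finish I would match the underlying sets: each $Q \in \Spec(R)$ corresponds to a point of some $D(f_i) \subseteq Y$, at which the stalk of $\OO_X$ is the localization $(D_i)_{Q(D_i)_{f_i}} = R_Q$, so the set of points of $Y$ inside $X$ is precisely $\{R_P : P \in \Spec(R)\}$. The main obstacle I anticipate is purely organizational: verifying that the gluing of the principal opens $D(f_i) \subseteq U_i$ really produces an affine scheme whose ring of global sections is exactly $R$ rather than some larger intersection of the $(D_i)_{f_i}$. This falls out cleanly, however, because the closed immersion $\gamma$ pulls back the standard affine cover $\{({\mathbb{P}}^n_D)_f \cap ({\mathbb{P}}^n_D)_{T_i}\}$ of $({\mathbb{P}}^n_D)_f$ to the cover $\{D(f_i)\}$ of $Y$, and the global sections of the affine scheme $({\mathbb{P}}^n_D)_f$ are tautologically $S[1/f]_0$.
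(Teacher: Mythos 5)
Your proposal is correct and follows essentially the same route as the paper: pull back the affine open $({\mathbb{P}}^n_D)_f$ along the closed immersion of Remark~\ref{closed immersion remark}, use that the preimage of an affine open under a closed immersion is affine with coordinate ring the image of $\OO_{{\mathbb{P}}^n_D}(({\mathbb{P}}^n_D)_f)$, and identify that image under $T_j \mapsto t_j$ with $R$. The only difference is that you verify the chart-by-chart gluing explicitly where the paper instead cites \cite[Lemma 01IN]{Stacks}.
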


\begin{proof}  
 Let $S = {\mathbb{P}}^n_D$. Then $S_f$ is an open affine subset of $S$  \cite[Exercise III.10, p.~99]{EH}.  By Remark~\ref{closed immersion remark}, there is a closed immersion
 $\gamma=(g,g^\#):X \rightarrow S$ such that  
with $\eta = g(F)$, we have for each $i,j$,  $g^\#_{\eta}\left({T_j}/{T_i}\right) = {t_j}/{t_i}$.  
Since $S_f$ is an open affine subset of $S$ and $\gamma$ is a closed immersion, then   $g^{-1}(S_f)$ is an open affine subset of $X$ whose ring of sections is
$g^\#_{\eta}(\OO_{S}(S_f))$ \cite[Lemma  01IN]{Stacks}.  Now $\OO_S(S_f)$ is the ring consisting of $0$ and the rational functions of the form ${h}/{f^e}$, where $e>0$ and $h$ is a homogeneous form of degree $de$. 
 Moreover, for such a rational function, since $f(t_0,\ldots,t_n) \ne 0$,  we have that 
 $f(T_0,\ldots,T_n)$ is a unit in $\OO_{S,\eta}$ and 
 $$
 g^\#_{\eta}\left(\frac{h(T_0,\ldots,T_n)}{f(T_0,\ldots,T_n)^e}\right)  = \frac{h(t_0,\ldots,t_n)}{f(t_0,\ldots,t_n)^e} \in R.$$ Thus $g^\#_{\eta}(\OO_S(S_f)) = R$, which proves the lemma.   
\end{proof}


\begin{lemma} 
\label{power}
 Let $t_0,t_1,\ldots,t_n$ be nonzero elements of $F$, and  let $f$ be a homogeneous polynomial in $D[T_0,\ldots,T_n]$ of positive degree $d$. 
Then the following are equivalent.

\begin{itemize}


\item[{(1)}] $t_0^d,\ldots,t_n^d \in  f(t_0,\ldots,t_n)A$.  

\item[{(2)}]  $(t_0,\ldots,t_n)^d A = f(t_0,\ldots,t_n)A$.


\item[{(3)}] The image of the morphism $Z \rightarrow {\mathbb{P}}^n_D$ defined by $t_0,\ldots,t_n$ is in $({\mathbb{P}}^n_D)_f$.  
 \end{itemize} 
\end{lemma}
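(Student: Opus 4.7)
The plan is to prove $(2) \Rightarrow (1) \Rightarrow (2)$ and then $(1) \Leftrightarrow (3)$. The implication $(2) \Rightarrow (1)$ is immediate since each $t_i^d$ lies among the generators of $(t_0,\ldots,t_n)^d A$. The containment $f(t_0,\ldots,t_n)A \subseteq (t_0,\ldots,t_n)^d A$ is also immediate, because $f(t_0,\ldots,t_n)$ is a $D$-linear combination of degree-$d$ monomials in the $t_i$ and $D \subseteq A$.

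For the non-trivial direction $(1) \Rightarrow (2)$, I would argue valuation-theoretically, exploiting $A = \bigcap_{V \in Z} V$. Fix $V \in Z$ with valuation $v$, and set $\mu = \min_i v(t_i)$. Every term of $f(t_0,\ldots,t_n)$ is of the form $c \cdot t_0^{\alpha_0}\cdots t_n^{\alpha_n}$ with $c \in D \subseteq V$ and $\sum \alpha_i = d$, so its value is at least $d\mu$; hence $v(f(t_0,\ldots,t_n)) \geq d\mu$. On the other hand, (1) gives $t_i^d/f(t_0,\ldots,t_n) \in A \subseteq V$ for each $i$, whence $v(f(t_0,\ldots,t_n)) \leq d\, v(t_i)$ for every $i$, forcing $v(f(t_0,\ldots,t_n)) \leq d\mu$. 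Thus $v(f(t_0,\ldots,t_n)) = d\mu$, and any degree-$d$ monomial in the $t_i$ has value at least $d\mu = v(f(t_0,\ldots,t_n))$, so lies in $f(t_0,\ldots,t_n)V$. Intersecting over $V \in Z$ yields $(t_0,\ldots,t_n)^d A \subseteq f(t_0,\ldots,t_n)A$.

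For $(1) \Leftrightarrow (3)$, let $\phi = \gamma \circ \delta$ be the morphism defined by $t_0,\ldots,t_n$, where $\delta : Z \to X$ is the domination morphism into the projective model $X$ of $F/D$ defined by the $t_i$ and $\gamma : X \to \mathbb{P}^n_D$ is the closed immersion of Remark~\ref{closed immersion remark}. Lemma~\ref{exercise} identifies the open affine $\gamma^{-1}((\mathbb{P}^n_D)_f)$ with $\{R_P : P \in \Spec(R)\}$, where $R \subseteq F$ is the subring consisting of $0$ and the fractions $h(t_0,\ldots,t_n)/f(t_0,\ldots,t_n)^e$ ($h$ homogeneous of degree $de$). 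Since each $V \in Z$ dominates a unique local ring of $X$, the image of $\phi$ lies in $(\mathbb{P}^n_D)_f$ if and only if every $V \in Z$ contains $R$, i.e., $R \subseteq A$. Specializing to $h = T_i^d$ with $e=1$ yields (1); conversely, given (1), the already-established implication $(1) \Rightarrow (2)$ gives $(t_0,\ldots,t_n)^{de}A = f(t_0,\ldots,t_n)^e A$ for every $e \geq 1$, so every homogeneous $h(t_0,\ldots,t_n)$ of degree $de$ lies in $f(t_0,\ldots,t_n)^e A$, whence $R \subseteq A$.

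The main obstacle is the valuation equality $v(f(t_0,\ldots,t_n)) = d \min_i v(t_i)$, which is the engine that promotes the one-sided information about the generators $t_i^d$ appearing in (1) to the full equality of ideals in (2). Once this is in hand, the passage between (2) and (3) via Lemma~\ref{exercise} is essentially bookkeeping.
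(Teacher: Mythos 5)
Your proposal is correct and follows essentially the same route as the paper: the implication $(1)\Rightarrow(2)$ is proved by the same local divisibility argument in each $V\in Z$ (the paper phrases it as ``some $t_i$ divides all the $t_j$ in $V$'' rather than via $\min_i v(t_i)$, but the content is identical), and the equivalence with $(3)$ is obtained, as in the paper, by using Lemma~\ref{exercise} to identify the preimage of $({\mathbb{P}}^n_D)_f$ with $\Spec(R)$ and reducing membership of the image to the containment $R\subseteq A$.
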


\begin{proof} Let $u = f(t_0,\ldots,t_n).$
First we claim that (1) implies (2). If $V \in Z$, then there is $i$ such that $t_i$ divides in $V$ each of $t_0,\ldots,t_n$.  It follows that  when $\sum_i e_i = d$ for nonnegative integers $e_i$, then $t_0^{e_0}t_1^{e_1} \cdots t_n^{e_n} \in t_i^dV$.
Thus by (1), $t_0^{e_0}t_1^{e_1} \cdots t_n^{e_n} \in uV$, so that $t_0^{e_0}t_1^{e_1} \cdots t_n^{e_n} \in uA$. Statement (2)  now follows.

    To see
 that (2) implies (3),   let $\gamma=(g,g^\#):Z \rightarrow {\mathbb{P}}^n_D$ be the morphism defined by $t_0,\ldots,t_n$.  By (2), $u = f(t_0,\ldots,t_n)$ is nonzero. 
Define  
\begin{eqnarray*} R & = & \{0\} \cup  \left\{\frac{h(t_0,\ldots,t_n)}{u^e}:e \geq 0 {\mbox{ and }} h {\mbox{ is a homogeneous form of degree }} de\right\} \\ S & = &  \{0\} \cup  \left\{\frac{h(T_0,\ldots,T_n)}{f(T_0,\ldots,T_n)^e}:e \geq 0 {\mbox{ and }}  h {\mbox{ is a homogeneous form of degree }} de\right\},\end{eqnarray*} so that $({\mathbb{P}}^n_D)_f = \Spec(S)$. 
Let $\alpha= (a,a^\#):\Spec(R) \rightarrow \Spec(S)$ be the morphism induced by the ring homomorphism $a^\#:S \rightarrow R$ given by evaluation at $t_0,\ldots,t_n$.  
We claim that $R \subseteq A$.
%
For let $h$ be a homogeneous form in $D[T_0,\ldots,T_n]$ of degree $de$. Then  by (2),  $h(t_0,\ldots,t_n) \in (t_0,\ldots,t_n)^{de}A=  u^{e}A$, so that $R \subseteq A$.   Now let $\beta:Z \rightarrow \Spec(R)$ be the induced domination morphism. We claim that  $\gamma = \alpha \circ \beta$.  Indeed, by Lemma~\ref{exercise}, $\Spec(R)$ is an affine submodel of the projective model $X$ of $F/D$ defined by $t_0,\ldots,t_n$, and $\gamma$ factors through $X$. Since $\beta$ is the domination mapping, it follows that $\gamma = \alpha \circ \beta$, and hence the image of $\gamma$ is contained in $\Spec(S) = ({\mathbb{P}}^n_D)_f$.
%

Finally, to see that (3) implies (1),  let $U =  ({\mathbb{P}}^n_D)_f$ and let $\gamma=(g,g^\#):Z \rightarrow {\mathbb{P}}^n_D$ be the morphism defined by $t_0,\ldots,t_n$. 
Since by (3), $Z \subseteq  g^{-1}(U)$, then $S$, the ring of sections of $U$, is mapped via  $g^\#_U$ into the holomorphy ring $A$ of $Z$.  But the image of $g^\#_U$ is $R$, so  $R \subseteq A$, and hence every element of $F$ of the form 
${t_i^{d}}/{u}$ is an element of $A$, from which (1) follows. 
\end{proof}

\begin{theorem} \label{big affine 2} The ring $A$  is a Pr\"ufer domain with torsion Picard group and quotient field $F$ if and only if for each $A$-morphism $\phi:Z \rightarrow {\mathbb{P}}_A^1$ there is a homogeneous polynomial  $f \in A[T_0,T_1]$ of positive degree such that the image of $\phi$ is in $({\mathbb{P}}_A^1)_f$.
\end{theorem}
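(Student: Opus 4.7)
The plan is to handle both directions by applying Proposition~\ref{factors}, Lemma~\ref{power}, and Theorem~\ref{big affine} with $A$ substituted for $D$ throughout. This substitution is legitimate because $A \subseteq V$ for every $V \in Z$, so $Z$ is naturally a subspace of the Zariski-Riemann space of $F/A$ with $F \in Z$, and the holomorphy ring is unchanged.

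For the forward direction, I begin with an $A$-morphism $\phi:Z \to {\mathbb{P}}_A^1$ and invoke Proposition~\ref{factors} to produce $t_0, t_1 \in F^*$ defining $\phi$. The fractional ideal $(t_0,t_1)A$ is finitely generated and hence invertible in the Pr\"ufer domain $A$; since $\mathrm{Pic}(A)$ is torsion, $(t_0,t_1)^d A = uA$ for some $d \geq 1$ and $u \in F^*$. Writing $u$ as an $A$-linear combination of the degree-$d$ monomials in $t_0, t_1$ yields a homogeneous $f \in A[T_0,T_1]$ of degree $d$ with $f(t_0,t_1) = u$, and Lemma~\ref{power} in the direction $(2)\Rightarrow(3)$ places the image of $\phi$ in $({\mathbb{P}}_A^1)_f$.

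For the converse, I first conclude that $A$ is Pr\"ufer: the hypothesis provides, for each $A$-morphism $\phi$, a homogeneous $f$ with image in the affine open subset $({\mathbb{P}}_A^1)_f$ (via the standard fact cited as \cite[Exercise III.10, p.~99]{EH} in the proof of Nagata's corollary), so Theorem~\ref{big affine} with $A$ in place of $D$ yields that $A$ is Pr\"ufer with quotient field $F$. Specializing the hypothesis to the $A$-morphism defined by an arbitrary pair $t_0, t_1 \in F^*$ and applying Lemma~\ref{power} in the direction $(3)\Rightarrow(2)$ then shows $(t_0,t_1)^d A = f(t_0,t_1) A$ is principal, so every $2$-generated fractional ideal of $A$ has a principal power.

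The main obstacle I expect is bootstrapping this $2$-generated statement to all finitely generated ideals, thereby establishing torsion of $\mathrm{Pic}(A)$. My plan relies on the Pr\"ufer identity $(a_1,\ldots,a_n)^m A = (a_1^m,\ldots,a_n^m) A$, which I can verify by localizing at each prime $P$ of $A$: since $A_P$ is a valuation ring, both sides equal $a_{i_0}^m A_P$, where $a_{i_0}$ has minimal valuation among the $a_i$. With this identity, an induction on $n$ finishes the proof: given $(a_1,\ldots,a_{n-1})^k A = uA$, the identity rewrites this as $(a_1^k,\ldots,a_{n-1}^k) A = uA$; applying the $2$-generated conclusion to $(u,a_n^k)$ produces $(u,a_n^k)^m A = vA$; and one more application of the identity gives $(u^m,a_n^{km}) A = (a_1^{km},\ldots,a_n^{km}) A = (a_1,\ldots,a_n)^{km} A$, which is therefore principal.
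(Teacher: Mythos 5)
Your proof is correct and follows essentially the same route as the paper: reduce to $D=A$, get the Pr\"ufer property from Theorem~\ref{big affine}, and translate between principal powers of $(t_0,t_1)A$ and images in $({\mathbb{P}}^1_A)_f$ via Proposition~\ref{factors} and Lemma~\ref{power}. The only difference is that the reduction from ``every two-generated fractional ideal has a principal power'' to torsion of the Picard group, which you carry out explicitly by induction using the identity $(a_1,\ldots,a_n)^mA=(a_1^m,\ldots,a_n^m)A$, is handled in the paper by a citation to the proof of Gilmer's Theorem~2.2.
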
 

\begin{proof} 
The choice of the subring $D$ of $F$ was arbitrary, so for the sake of this proof we may assume without loss of generality that $D = A$ and apply then the preceding results to $A$. 
Suppose that for  each $A$-morphism $\phi:Z \rightarrow {\mathbb{P}}_A^1$ there exists a homogeneous polynomial $f \in A[T_0,T_1]$ of positive degree such that the image of $\phi$ is in the affine subset  $({\mathbb{P}}_A^1)_f$.
By Theorem~\ref{big affine}, 
 $A$ 
 is a Pr\"ufer  domain with quotient field $F$.   
Thus to prove that $A$ has torsion Picard group, it suffices to show that for each two-generated ideal  $(t_0,t_1)A$ of $A$, there exists $e>0$ such that $(t_0,t_1)^eA$ is a principal ideal (see for example the proof of \cite[Theorem 2.2]{Gi}). 
 Let $t_0,t_1 \in F$, and let $\phi:Z \rightarrow {\mathbb{P}}_A^1$ be the morphism defined by $t_0,t_1$.  Then by assumption, there exists a homogeneous polynomial $f \in A[T_0,T_1]$ of positive degree $d$ such that the image of $Z$ in ${\mathbb{P}}_A^1$ is contained in $({\mathbb{P}}_A^1)_f$.
 Thus by 
 Lemma~\ref{power}, $(t_0,t_1)^dA$ is a principal ideal.
 
 Conversely,  let $\phi:Z \rightarrow {\mathbb{P}}_A^1$ be an $A$-morphism. Then by Proposition~\ref{factors} there exist $t_0,t_1 \in F$ such that $\phi$ is defined by $t_0,t_1$. 
Since $A$ has torsion Picard group and quotient field $F$,  there exists $d>0$ such that $(t_0,t_1)^dA = uA$ for some $u \in (t_0,t_1)^dA$. Since $u$ is an element of $(t_0,t_1)^dA$, there exists a homogeneous polynomial $f \in A[T_0,T_1]$ of  degree $d$ such that $f(t_0,t_1) = u$, and hence by Lemma~\ref{power}, the image of the morphism $\phi$ is contained in $({\mathbb{P}}_A^1)_f$. 
 \end{proof}

For applications such as those discussed in  (2) and (3) of the introduction, one needs to work with $D$-morphisms into the projective line over $D$, rather than $A$.  This involves a change of base, but causes no difficulties when verifying that $A$ is a Pr\"ufer domain. However, the converse of Theorem~\ref{big affine 2} (which is  not needed in the applications in (2) and (3) of the introduction) is lost in the base change.

\begin{corollary} \label{main cor}  
If for each $D$-morphism $\phi:Z \rightarrow \PPD$ there exists a homogeneous polynomial $f \in D[T_0,T_1]$ of positive degree such that the image of $Z$ is contained in $(\PPD)_f$, then $A$ is a Pr\"ufer domain with torsion Picard group and quotient field $F$.  
\end{corollary}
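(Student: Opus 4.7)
The plan is to combine Theorems \ref{big affine} and \ref{big affine 2} with Lemma \ref{power} to translate between $D$-morphisms into $\PPD$ and $A$-morphisms into ${\mathbb{P}}^1_A$. Since $(\PPD)_f$ is an affine open subset of $\PPD$ for every homogeneous $f \in D[T_0,T_1]$ of positive degree, the hypothesis immediately implies that every $D$-morphism $Z \to \PPD$ factors through an affine scheme, so Theorem \ref{big affine} yields that $A$ is a Pr\"ufer domain with quotient field $F$.

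To establish the torsion Picard group assertion, I would verify the criterion in the ``if'' direction of Theorem \ref{big affine 2}. Let $\psi: Z \to {\mathbb{P}}^1_A$ be an arbitrary $A$-morphism; the task is to produce a homogeneous $g \in A[T_0,T_1]$ of positive degree with image of $\psi$ contained in $({\mathbb{P}}^1_A)_g$. Since $A$ has quotient field $F$ and $A \subseteq V$ for each $V \in Z$, the space $Z$ sits as a subspace of the Zariski--Riemann space of $F/A$ and contains $F$, so Proposition \ref{factors}, applied with $A$ in place of $D$, produces $t_0, t_1 \in F$ for which $\psi$ is the morphism defined by $t_0, t_1$. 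The same pair $t_0, t_1$ also defines, via the inclusion $D \subseteq A$, a $D$-morphism $\psi': Z \to \PPD$. By the hypothesis of the corollary, there is a homogeneous $f \in D[T_0,T_1]$ of positive degree $d$ such that the image of $\psi'$ is contained in $(\PPD)_f$.

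The final step is to move $f$ across the base change. Applying Lemma \ref{power} to the $D$-morphism $\psi'$ shows that $(t_0,t_1)^d A = f(t_0,t_1)A$. Now viewing $f$ as an element of $A[T_0,T_1]$ under $D[T_0,T_1] \subseteq A[T_0,T_1]$, this equality is precisely condition (2) of Lemma \ref{power} for the $A$-morphism $\psi$, and the lemma read in the reverse direction places the image of $\psi$ in $({\mathbb{P}}^1_A)_f$. Theorem \ref{big affine 2} then delivers the torsion Picard group conclusion. The only delicate point is to verify that Proposition \ref{factors} and Lemma \ref{power} remain valid with $A$ replacing $D$; this holds because $A$ is a subring of $F$ with quotient field $F$ and $F \in Z$, so the whole framework applies verbatim. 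Beyond this, Lemma \ref{power} is crafted precisely so that its intrinsic ideal-theoretic condition (2) is independent of the ambient base ring, making the base-change passage essentially formal.
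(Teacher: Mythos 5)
Your argument is correct, and it lands in the same place as the paper (verifying the hypothesis of Theorem~\ref{big affine 2} for an arbitrary $A$-morphism $Z \to {\mathbb{P}}^1_A$), but the transfer step is executed differently. The paper simply composes the given $A$-morphism $\phi$ with the base-change morphism $\alpha:{\mathbb{P}}^1_A \to {\mathbb{P}}^1_D$, applies the hypothesis to the resulting $D$-morphism to get $f$, and observes that $\alpha^{-1}\bigl((\PPD)_f\bigr) = ({\mathbb{P}}^1_A)_f$, so the image of $\phi$ lies in $({\mathbb{P}}^1_A)_f$ --- a purely formal pullback of the affine open, with no need to extract generators. You instead invoke Proposition~\ref{factors} over the base $A$ to write $\phi$ as the morphism defined by some $t_0,t_1 \in F$, apply the hypothesis to the $D$-morphism defined by the same pair, and shuttle through the base-independent ideal-theoretic condition $(t_0,t_1)^dA = f(t_0,t_1)A$ of Lemma~\ref{power} in both directions. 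This is valid: condition (2) of that lemma genuinely does not see the base ring, and the framework of Proposition~\ref{factors} and Lemma~\ref{power} applies verbatim with $A$ in place of $D$ (the paper makes exactly this observation at the start of the proof of Theorem~\ref{big affine 2}). What your route buys is an explicit witness --- the degree $d$ and the principal ideal $(t_0,t_1)^dA = f(t_0,t_1)A$ --- at the cost of re-deriving inside the corollary what the proof of Theorem~\ref{big affine 2} already does internally; the paper's route is shorter and keeps the corollary a pure base-change formality. Your opening appeal to Theorem~\ref{big affine} for the Pr\"ufer conclusion is harmless but redundant, since Theorem~\ref{big affine 2} delivers it anyway.
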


\begin{proof} 
Let $\phi:Z \rightarrow {\mathbb{P}}_A^1$ be a $D$-morphism, and let $\alpha:{\mathbb{P}}_A^1 \rightarrow {\mathbb{P}}_D^1$ be the change of base morphism.  By assumption there exists a homogeneous polynomial $f \in D[T_0,T_1]$ such that the image of $\alpha \circ \phi$ is contained in $(\PPD)_f$. Then  the image of $\phi$ is contained in $({\mathbb{P}}_A^1)_f$, and the corollary follows from Theorem~\ref{big affine}. 
\end{proof}  

Let $n$ be a positive integer. An abelian group $G$ is an {\it $n$-group} if each element of $G$ has finite order and this order is divisible by such primes only which also appear as factors of $n$. If $A$ is a Pr\"ufer domain with quotient field $F$, then the Picard group of $A$ is an $n$-group if and only if for each $t \in F$ there exists $k>0$ such that $(A + tA)^{n^k}$ is a principal fractional ideal of $A$ \cite[Lemma 1]{Ro}.

\begin{remark} \label{exponent} {\em
   If each homogeneous polynomial $f$ arising as in the statement of the corollary  can be chosen with degree $\leq  n$ ($n$ fixed), then  the Picard group  of the Pr\"ufer domain $A$ is an $n$-group. For  when $t \in F$ and $\phi:Z \rightarrow \PPD$ is the $D$-morphism defined by $1,t$, then with $f$ the polynomial  of degree $\leq n$  given by the corollary,  Lemma~\ref{power} shows that 
 $(A+tA)^n$ is a principal fractional ideal of $A$. %
In particular, when for each $D$-morphism $\phi:Z \rightarrow \PPD$, there exists a linear homogeneous polynomial $f \in A[T_0,T_1]$  such that the image of $\phi$ is contained in $({\mathbb{P}}_A^1)_f$, then the ring $A$  is a {B\'ezout domain}  with quotient field $F$. } 
\end{remark} 

The next corollary is a stronger version of statement (4) in the introduction. 



\begin{corollary}  If $D$ is a local domain  and   $Z$ has cardinality less than that of the residue field of $D$, then $A$ is a B\'ezout domain with quotient field $F$.
\end{corollary}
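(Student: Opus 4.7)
My plan is to apply the final assertion of Remark~\ref{exponent}: it suffices to produce, for each $D$-morphism $\phi:Z \rightarrow \PPD$, a linear homogeneous polynomial $f \in D[T_0,T_1]$ such that the image of $\phi$ is contained in $(\PPD)_f$. Such an $f$, regarded as an element of $A[T_0,T_1]$, then cuts out an affine open of ${\mathbb{P}}_A^1$ containing the image of the lifted $A$-morphism $Z \to {\mathbb{P}}_A^1$, and the Remark yields that $A$ is a B\'ezout domain with quotient field $F$.

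So fix $\phi:Z \to \PPD$. By Proposition~\ref{factors}, $\phi$ is defined by some $t_0, t_1 \in F$; in the trivial case that one of these is zero the image of $\phi$ is a single closed point of $\PPD$ and a suitable linear form is obvious, so I will assume $t_0, t_1 \ne 0$. Let $\gamma: X \to \PPD$ be the closed immersion of the projective model $X$ of $F/D$ defined by $t_0,t_1$ and let $\delta: Z \to X$ be the domination morphism, so $\phi = \gamma \circ \delta$. For each $a \in D$ I would test the candidate linear form $f = T_0 - aT_1$: chasing $\gamma$ through the charts $\Spec(D[T_1/T_0])$ and $\Spec(D[T_0/T_1])$ covering $\PPD$, one verifies that $f$ fails to vanish at $\phi(V)$ unless $t_0/t_1 \in V$ and $a \equiv t_0/t_1 \pmod{\mathfrak{m}_V}$. (Indeed, if $t_0/t_1 \notin V$ then $t_1/t_0 \in \mathfrak{m}_V$, and reading $f$ on the $T_0$-chart gives the value $1 - a\cdot 0 = 1 \ne 0$ in $V/\mathfrak{m}_V$.)

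It therefore remains to choose $a \in D$ whose reduction modulo $\mathfrak{m}_V$ is distinct from $\overline{t_0/t_1}$ for every $V \in Z$ with $t_0/t_1 \in V$. Fix a set of coset representatives $S \subseteq D$ of the residue field $k = D/\mathfrak{m}_D$, so $|S| = |k|$. Because $D$ is local, the center $\mathfrak{m}_V \cap D$ is contained in $\mathfrak{m}_D$ for every $V \in Z$; hence any two distinct elements of $S$ differ by a unit of $D$ and therefore of $V$, and so have distinct images in $V/\mathfrak{m}_V$. Consequently at most one element of $S$ reduces to $\overline{t_0/t_1}$ in any given $V/\mathfrak{m}_V$, so the ``bad'' subset of $S$ has cardinality at most $|Z|$; since $|Z| < |k| = |S|$, a good $a$ exists. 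The one step that is not essentially formal is the chart computation identifying the vanishing locus of $f$ at $\phi(V)$ with the stated residue-field condition; once this is in hand, the whole corollary reduces to the counting argument above, which is prime avoidance in disguise.
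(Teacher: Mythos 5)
Your proof is correct and follows essentially the same route as the paper: both arguments seek a linear form $T_0 \pm aT_1$ not vanishing on the image of $\phi$, and both obtain it by the same counting argument, namely that distinct residues modulo ${\ff m}_D$ remain distinct in the residue field of each point in the image (your per-valuation-ring condition $a \equiv t_0/t_1 \pmod{{\ff M}_V}$ is exactly the paper's observation that each set $\Delta_P$ lies in a single residue class), after which $|Z| < |D/{\ff m}_D|$ yields a good choice of $a$. The conclusion via Corollary~\ref{main cor} and Remark~\ref{exponent} is likewise the same.
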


\begin{proof}  
 Let $\phi:Z \rightarrow \PPD$ be a $D$-morphism. 
For each $P \in $ Proj$(D[T_0,T_1])$, let $\Delta_P = \{d \in D :T_0+dT_1 \in P\}$. Then all the elements of $\Delta_P$ have the same image in the residue field of $D$. Indeed, 
if $d_1,d_2 \in  \Delta_P$, then $(d_1 - d_2)T_1 = (T_0+d_1T_1)-(T_0+d_2T_1)  \in P$. 
If  $T_1 \in P$, then since $T_0+d_1T_1 \in P$, this forces $(T_0,T_1) \subseteq P$, a contradiction to the fact that $P \in $ Proj$(D[T_0,T_1])$. Therefore, $T_1 \not \in P$, so that $d_1 - d_2 \in P \cap D \subseteq {\ff m}:=$ maximal ideal of $D$, which shows that  all the elements of $\Delta_P$ have the same image in the residue field of $D$. Let $X$ denote the image of $\phi$  in $\PPD$.   Then since $|X| < |D/{\ff m}|$, there 
 exists $d \in D \smallsetminus \bigcup_{P \in X}\Delta_P$, and hence $f(T_0,T_1):=T_0+dT_1 \not \in P$ for all $P \in X$.    Thus the image of $\phi$ is   in $(\PPD)_f$, and  
by Corollary~\ref{main cor} and Remark~\ref{exponent},  
 $A$ is a B\'ezout 
domain with quotient field $F$. 
\end{proof}

The following corollary is a small improvement of a theorem of Rush  \cite[Theorem 1.4]{Rush}. Whereas the theorem of Rush requires that $1,t,t^2,\ldots,t^{d_t} \in f_t(t)A$, we need only that 
 $1,t^{d_t} \in f_t(t)A$.

\begin{corollary} \label{torsion Prufer} The ring   $A$ is a Pr\"ufer domain with torsion Picard group and quotient field $F$ if and only if  for each $0 \ne t \in F$, there is  a polynomial $f_t(T) \in A[T]$ of positive degree $d_t$ such that 
 $1,t^{d_t} \in f_t(t)A$.  
\end{corollary}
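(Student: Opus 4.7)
The plan is to apply Theorem~\ref{big affine 2} (with $D = A$) together with Lemma~\ref{power} as a dictionary between the geometric condition on $A$-morphisms $Z \to \mathbb{P}^1_A$ and the given ideal-theoretic condition. In both directions the key is that the morphism defined by two elements $t_0, t_1 \in F^\ast$ can be analyzed by homogenizing a one-variable polynomial in $t = t_1/t_0$.

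For sufficiency, I will take an arbitrary $A$-morphism $\phi \colon Z \to \mathbb{P}^1_A$ and produce a homogeneous $g \in A[T_0, T_1]$ of positive degree whose principal open set in $\mathbb{P}^1_A$ contains the image of $\phi$. Proposition~\ref{factors} provides nonzero $t_0, t_1 \in F$ defining $\phi$; setting $t = t_1/t_0$, the hypothesis yields $f_t(T) \in A[T]$ of positive degree $d$ with $1, t^d \in f_t(t) A$. By Lemma~\ref{power} applied to $(1, t)$, this is equivalent to $(1, t)^d A = f_t(t) A$; scaling by $t_0^d$ then yields $(t_0, t_1)^d A = g(t_0, t_1) A$ for the degree-$d$ homogenization $g(T_0, T_1) = T_0^d f_t(T_1/T_0)$. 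A second application of Lemma~\ref{power}, now with $(t_0, t_1)$ and $g$, places the image of $\phi$ in $(\mathbb{P}^1_A)_g$, and Theorem~\ref{big affine 2} concludes.

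For necessity, I will produce $f_t$ for each $t \in F^\ast$. When $t \in A$, the choice $f_t(T) = T + (1 - t) \in A[T]$ has degree $1$ and $f_t(t) = 1$, so $f_t(t) A = A$ contains both $1$ and $t$. When $t \notin A$, the fractional ideal $(1, t) A$ properly contains $A$; since $A$ is Pr\"ufer with torsion Picard group, $(1, t)^d A = uA$ for some $d > 0$ and $u \in F^\ast$. Expanding $u$ as an $A$-linear combination $u = \sum_{i=0}^d a_i t^i$ with $a_i \in A$, I set $f_t(T) = \sum a_i T^i$.

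The main obstacle is verifying that $f_t$ so constructed has positive degree $d_t$ and that $t^{d_t}$ lies in $f_t(t) A$. If $f_t$ were constant, then $u = a_0$ would lie in $A$, contradicting $uA = (1, t)^d A \supseteq (1, t) A \supsetneq A$ (the last strict inclusion using $t \notin A$). So $d_t := \deg f_t \ge 1$, and since $1 \in (1, t)^{d - d_t} A$, the inclusion $(1, t)^{d_t} A \subseteq (1, t)^d A = f_t(t) A$ places both $1$ and $t^{d_t}$ in $f_t(t) A$, as required.
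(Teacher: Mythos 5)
Your proof is correct and follows essentially the same route as the paper's: both directions go through Theorem~\ref{big affine 2}, Proposition~\ref{factors}, and Lemma~\ref{power} applied to the homogenization of $f_t$. The only (harmless) divergence is in the necessity direction, where the paper first reduces $(1,t)^{d}A$ to $(1,t^{d})A$ by local verification in the Pr\"ufer domain, whereas you expand the principal generator of $(1,t)^dA$ in all the monomials $t^i$ and check directly that the resulting polynomial is nonconstant.
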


\begin{proof} 
 If $A$ is a Pr\"ufer domain with torsion Picard group and quotient field $F$, then for each $0 \ne t \in F$, there is  $d_t>0$ such that $(1,t)^{d_t}A$ is a principal fractional ideal of $A$. Since $A$ is a Pr\"ufer domain, local verification shows that $(1,t)^{d_t}A = (1,t^{d_t})A$, and it follows that  there is  a polynomial $f_t(T) \in A[T]$ of positive degree $d_t$ such that 
 $1,t^{d_t} \in f_t(t)A$.  
 
 To prove the converse, 
we use Theorem~\ref{big affine 2}.  
 Let $\phi:Z \rightarrow {\mathbb{P}}^1_D$ be  a $D$-morphism. Then by Proposition~\ref{factors}  there exists $0 \ne t \in F$ such that $\phi$ is defined by $1,t$. 
 By assumption, there is  a polynomial $f_t(T) \in A[T]$ of positive degree $d_t$ such that 
 $1,t^{d_t} \in f_t(t)A$.
 Set $g_t(T_0,T_1) = f_t(T_0/T_1)T_1^{d_t}$, so that $g_t(T_0,T_1)$ is a homogeneous form of positive degree. 
 Then $1,t^{d_t} \in g_t(t,1)A$, and  by  Lemma~\ref{power}
 the image of $\phi$ is in $({\mathbb{P}}^1_A)_g$. By Theorem~\ref{big affine 2}, 
  $A$ is a Pr\"ufer domain with torsion Picard group and quotient field $F$.  \end{proof}



Rush  \cite[Theorem 2.2]{Rush} proves that when  $f$ is a monic polynomial of positive degree in $A[T]$, then 
 (a) $\{{1}/{f(t)}:t \in F\} \subseteq A$ if and only if (b) the image of $f$ in $(V/{\ff M}_V)[T]$ has no root in $V/{\ff M}_V$ for each $V \in Z$; if and only if (c) $A$ is a Pr\"ufer domain and $f(a)$ is a unit in $A$ for each $a \in A$.   As Rush points out, Gilmer's theorem discussed in (2) of the introduction 
  follows quickly from the equivalence of (a) and (b) and Corollary~\ref{torsion Prufer}; see the discussion on pp.~314-315 of \cite{Rush}.  Similarly, the results of Loper and Roquette described in (3) of the introduction also follow from Corollary~\ref{torsion Prufer} and the equivalence of (a) and (b).   Thus all the constructions in (1)--(4) of the introduction are recovered by the results in this section.

\section{The case where $D$ is a local ring}

This section focuses on  the case where $D$ is a local ring that is integrally closed in $F$. (By a local ring, we mean a ring that has  a unique maximal ideal; in particular, we do not require local rings to be Noetherian.)   In such a case,  as is noted in the proof of Theorem~\ref{P1 theorem}, every proper subset of closed points of $\PPD$   is contained in an affine open subset of $\PPD$, a fact which leads to a stronger result than could  be obtained in the last section. To prove the theorem, 
we need a   coset version of homogeneous prime avoidance. The proof of the lemma follows Gabber-Liu-Lorenzini \cite{GLL} but involves a slight modification to permit  cosets.

\begin{lemma} \label{PA} {\em (cf.~\cite[Lemma 4.11]{GLL})} Let $R = \bigoplus_{i=0}^\infty R_i$ be a graded ring, and let $P_1,\ldots,P_n$ be incomparable  homogeneous prime ideals not containing $R_1$. Let $I=\bigoplus_{i=0}^\infty I_i$ be a homogeneous ideal of $R$ such that $I \not \subseteq P_i$ for each $i=1,\ldots,n$. Then there exists $e_0>0$ such that for all $e \geq e_0$ and $r_1,\ldots,r_n \in R$, $I_{e} \not \subseteq  \bigcup_{i=1}^n (P_i +r_i)$.  
\end{lemma}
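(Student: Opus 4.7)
The plan is to adapt the standard homogeneous prime avoidance construction to produce a suitable $y \in I_e$. For each $i$, I will build a homogeneous element $y_i \in I_e$ (for all sufficiently large $e$) that lies in $P_j$ for every $j \neq i$ but not in $P_i$. By incomparability, for each ordered pair $i \neq j$ there is a homogeneous element $b_{ji} \in P_j \setminus P_i$ (take a suitable homogeneous component of an element of $P_j$ that falls outside the homogeneous prime $P_i$). Set $a_i = \prod_{j \neq i} b_{ji}$, so $a_i$ is homogeneous, lies in $\bigcap_{j \neq i} P_j$, and by primality of $P_i$ is not in $P_i$. Because $I \not\subseteq P_i$ and $I$ is homogeneous, there is a homogeneous $c_i \in I \setminus P_i$; because $R_1 \not\subseteq P_i$, there is $x_i \in R_1 \setminus P_i$. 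Put $e_0 = \max_i (\deg a_i + \deg c_i)$, and for $e \geq e_0$ define
$$y_i = c_i\, a_i\, x_i^{\,e - \deg a_i - \deg c_i} \in I_e.$$
Primality of $P_i$ gives $y_i \notin P_i$, while $a_i \in P_j$ forces $y_i \in P_j$ for $j \neq i$.

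Given $r_1,\ldots,r_n \in R$, the coset modification is to choose scalars $\lambda_i \in \{0,1\} \subseteq R_0$ and form $y = \sum_{i=1}^n \lambda_i y_i \in I_e$. The key observation is that because $P_i$ is homogeneous and $y \in R_e$, membership $y \in P_i + r_i$ is equivalent to $y - (r_i)_e \in P_i$ together with $(r_i)_k \in P_i$ for all $k \neq e$, where $(r_i)_k$ denotes the degree-$k$ homogeneous component of $r_i$; in particular, it suffices to arrange $y \not\equiv (r_i)_e \pmod{P_i}$ for each $i$. Since $y_j \in P_i$ for $j \neq i$, the image of $y$ in $R/P_i$ is simply $\lambda_i \bar y_i$. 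Set $\lambda_i = 0$ if $(r_i)_e \notin P_i$ (so $\overline{(r_i)_e} \neq 0$ while $\lambda_i \bar y_i = 0$), and $\lambda_i = 1$ if $(r_i)_e \in P_i$ (so $\overline{(r_i)_e} = 0$while $\lambda_i \bar y_i = \bar y_i \neq 0$, using $1 \notin P_i$). Either way $y \not\equiv r_i \pmod{P_i}$, so $y \notin \bigcup_{i=1}^n (P_i + r_i)$, and hence $I_e \not\subseteq \bigcup_{i=1}^n (P_i + r_i)$.

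The substance of the argument is the construction of the $y_i$, which is the Gabber-Liu-Lorenzini strategy; the coset twist is handled entirely by the $\{0,1\}$-choice of the $\lambda_i$ and requires no further arithmetic in $R_0$, so the bound $e_0$ depends only on $I$ and the $P_i$, not on the $r_i$. The hypothesis $R_1 \not\subseteq P_i$ enters in producing the homogenizing multipliers $x_i$ that bring each $y_i$ up to the common degree $e$, which is what allows the linear combination to remain homogeneous of degree $e$ and in $I_e$.
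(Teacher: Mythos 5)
Your proof is correct, and it takes a genuinely different route from the paper's. The paper follows Gabber--Liu--Lorenzini and argues by induction on $n$: incomparability gives $IP_1\cdots P_{n-1}\not\subseteq P_n$, the case $n=1$ produces a witness $a\in (IP_1\cdots P_{n-1})_e\smallsetminus(P_n+r_n)$, the inductive hypothesis applied to $IP_n$ produces $b\in (IP_n)_e\smallsetminus\bigcup_{i<n}(P_i+r_i)$, and $a+b$ does the job. You instead give a direct, non-inductive argument in the style of the Chinese-remainder proof of prime avoidance: incomparability yields homogeneous $b_{ji}\in P_j\smallsetminus P_i$, whence the ``indicator'' elements $y_i\in I_e$ with $y_i\notin P_i$ and $y_i\in P_j$ for $j\neq i$, and the coset shift is absorbed by choosing $\lambda_i\in\{0,1\}$ according to whether $(r_i)_e\in P_i$, using that $P_i$ is homogeneous to reduce $y\in P_i+r_i$ to a condition on the degree-$e$ component of $r_i$. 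Your version makes it completely transparent why $e_0$ is independent of the $r_i$ (the $y_i$ are built before the $r_i$ are seen, and only the $\{0,1\}$-coefficients depend on them) and handles all the primes simultaneously in one reduction modulo each $P_i$; the paper's inductive version is a more economical perturbation of the published GLL lemma and avoids writing down the elements $a_i=\prod_{j\neq i}b_{ji}$ explicitly. Both proofs use the two hypotheses (incomparability and $R_1\not\subseteq P_i$) in essentially the same places.
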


\begin{proof}
The proof is by induction on $n$. For the case $n = 1$, let $s$ be a homogeneous element in $ I \smallsetminus P_1$, let $e_0 = \deg s$, let $e\geq e_0$ and let $t \in R_1 \smallsetminus P_1$.  
 Suppose that $r_1 \in R$ and  $I_{e} \subseteq P_1 + r_1$.  Then since $0 \in I_{e}$, this forces $r_1 \in P_1$ and hence $st^{e-e_0} \in I_{e} \subseteq P_1$, a contradiction to the fact that neither $s$ nor $t$ is in $P_1$. Thus $I_{e} \not \subseteq P_1 + r_1$. 
Next, let $n>1$,  and suppose that 
 the lemma holds for $n-1$. 
 Then since the $P_i$ are incomparable,   $IP_1 \cdots P_{n-1} \not \subseteq P_n$, and by the case $n=1$, there exists $f_0>0$ such that for all $f \geq f_0$ and $r_n \in R$, $(IP_1 \cdots P_{n-1})_{f} \not \subseteq  (P_n + r_n)$.    
 Also, by the induction hypothesis, there exists $g_0>0$ 
such that for all $g \geq g_0$ and $r_1,\ldots,r_{n-1} \in R$, $(IP_n)_{g} \not \subseteq  \bigcup_{i=1}^{n-1}(P_i+r_i)$.   
Let $e_0 = \max \{f_0,g_0\}$, let $e \geq e_0$ and let $r_1,\ldots,r_n \in R$.  
Then in light of the above considerations, we may choose $a \in (IP_1 \cdots P_{n-1})_{e} \smallsetminus (P_n + r_n)$ and $b \in (IP_n)_{e} \smallsetminus  \bigcup_{i=1}^{n-1}(P_i+r_i)$.   
Then $a + b \in I_{e} \smallsetminus \bigcup_{i=1}^n (P_i +r_i)$.
\end{proof}


\begin{theorem} \label{P1 theorem} Suppose $D$ is local and integrally closed in $F$ and 
 only finitely many valuation rings in $Z$  do not dominate $D$. 
  If 
no $D$-morphism $Z \rightarrow \PPD$ has every closed point of $\PPD$ in its image, then  
 $A$ is a Pr\"ufer domain with torsion Picard group and quotient field $F$.

\end{theorem}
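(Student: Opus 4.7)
The plan is to deduce the theorem from Corollary~\ref{main cor} by showing that for every $D$-morphism $\phi : Z \to \PPD$ the image $\phi(Z)$ is contained in $(\PPD)_f$ for some homogeneous $f \in D[T_0,T_1]$ of positive degree. Fix such a $\phi$ and decompose its image as $\phi(Z) = X_{\text{dom}} \cup \{Q_1,\ldots,Q_m\}$, where $Q_1,\ldots,Q_m$ are the finitely many images of the valuation rings in $Z$ that do not dominate $D$. Since $\phi$ is a $D$-morphism and each dominating valuation ring has center $\mathfrak m$ in $D$, every point of $X_{\text{dom}}$ contains $\mathfrak m[T_0,T_1]$; thus $X_{\text{dom}}$ lies in the closed fiber ${\mathbb P}^1_k = {\rm Proj}(k[T_0,T_1])$, where $k = D/\mathfrak m$. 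On the other hand, each $Q_i$ satisfies $Q_i \cap D \subsetneq \mathfrak m$ (so $\mathfrak m[T_0,T_1] \not\subseteq Q_i$) and, being a point of $\PPD$, does not contain the irrelevant ideal $(T_0,T_1)$.

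Next I would exploit that, by properness of $\PPD \to \Spec D$ and the locality of $D$, every closed point of $\PPD$ lies in ${\mathbb P}^1_k$, and such points are precisely the principal homogeneous primes $(\ell)$ generated by irreducible forms $\ell \in k[T_0,T_1]$ of some positive degree $d$. By hypothesis I may choose such a closed point $P_0 = (\ell) \notin \phi(Z)$ and fix a homogeneous lift $\tilde\ell \in D[T_0,T_1]$ of degree $d$. Since $k[T_0,T_1]$ is a UFD, no homogeneous prime of $k[T_0,T_1]$ other than $P_0$ and the irrelevant ideal contains any power $\ell^j$; because the irrelevant ideal is not a point of ${\mathbb P}^1_k$, it follows that for every $j \geq 1$ and every $h \in \mathfrak m[T_0,T_1]_{jd}$, the form $\tilde\ell^j + h$ reduces modulo $\mathfrak m[T_0,T_1]$ to $\ell^j$ and hence avoids every prime of $X_{\text{dom}}$ simultaneously.

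The remaining task is to pick $j$ and $h$ so that $f := \tilde\ell^j + h$ also avoids each $Q_i$. After replacing $\{Q_1,\ldots,Q_m\}$ by its subset of maximal elements (avoidance of a larger prime forces avoidance of any smaller one it contains), I may assume the $Q_i$ are pairwise incomparable. Applying Lemma~\ref{PA} to $R = D[T_0,T_1]$, the homogeneous ideal $I = \mathfrak m[T_0,T_1]$, and these incomparable primes---whose remaining hypotheses were checked above---yields a threshold $e_0$ such that for every $e \geq e_0$ and every choice of $r_i \in R$, the degree-$e$ piece $I_e$ is not contained in $\bigcup_i (Q_i + r_i)$. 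Taking $j$ with $jd \geq e_0$ and $r_i = -\tilde\ell^j$ then produces an $h \in \mathfrak m[T_0,T_1]_{jd}$ with $\tilde\ell^j + h \notin Q_i$ for every $i$. The resulting $f$ satisfies $\phi(Z) \subseteq (\PPD)_f$, and Corollary~\ref{main cor} delivers the conclusion.

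The main subtlety---and the reason the coset version of homogeneous prime avoidance in Lemma~\ref{PA} is needed rather than the ordinary one---is that $X_{\text{dom}}$ can be infinite, so it must be avoided all at once by pinning the reduction of $f$ modulo $\mathfrak m[T_0,T_1]$ to the specific power $\ell^j$. That constraint in turn fixes the coset of $f - \tilde\ell^j$ modulo each $Q_i$, and passing from $\ell$ to a sufficiently high power $\ell^j$ is the device that reconciles the degree $d$ of the chosen irreducible lift with the threshold $e_0$ produced by Lemma~\ref{PA}.
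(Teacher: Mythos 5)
Your proposal is correct and follows essentially the same route as the paper: choose a closed point $(\mathfrak m,\ell)$ missing from the image, note that the images of the dominating valuation rings lie in the closed fiber and are avoided by anything congruent to a power of $\ell$ modulo $\mathfrak m[T_0,T_1]$, and then use the coset prime avoidance of Lemma~\ref{PA} (after passing to the maximal elements among the finitely many non-dominating centers) to perturb $\tilde\ell^{\,j}$ by an element of $\mathfrak m[T_0,T_1]$ so as to avoid those as well. The paper's $f = h - g^e$ is your $f=\tilde\ell^{\,j}+h$ up to sign and notation, so the two arguments coincide.
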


 
 


  


\begin{proof}
Let $S = D[T_0,T_1]$. 
By Corollary~\ref{main cor} it suffices to show that for each $D$-morphism $\phi:Z \rightarrow \PPD$, there is a homogeneous polynomial $f \in S$ of positive degree such that the image of $\phi$ is in $(\PPD)_f$. To this end, let $\phi:Z \rightarrow \PPD$ be a $D$-morphism. By assumption, there is a closed point $x \in \PPD$ not in the image of $\phi$. 
Let $\pi:\PPD \rightarrow \Spec(D)$ be the structure morphism. Since $\pi$ is a proper morphism, $\pi$ is closed and hence 
 $\pi(x)$ is a closed point in $\Spec(D)$. Thus since $D$ is  local, $\pi(x)$ is the maximal ideal ${\ff m}$ of $D$.  Let $\Bbbk$ be the residue field of $D$. 
Then, with $Q$ the  
homogeneous prime ideal in $S$ corresponding  to $x$, we must have ${\ff m} \subseteq Q$, and hence Proj$(\Bbbk[T_0,T_1])$ is isomorphic to a closed subset of $\PPD$ containing $Q$. Since a homogeneous prime ideal in Proj$(\Bbbk[T_0,T_1])$ is generated by a homogeneous polynomial  in $\Bbbk[T_0,T_1]$, it follows that there is 
 a homogeneous polynomial  $g \in S$ of positive degree $d$ such that $Q = ({\ff m},g)S$.   
Since, as noted above, every prime ideal in $\PPD =$ Proj($S)$ corresponding to a closed point in $\PPD$ contains ${\ff m}$, it follows that every closed point in $\PPD$ distinct from $x$ is contained in $(\PPD)_g$. Thus if every valuation ring in $Z$ other than $F$ dominates $D$, then the image of $\phi$ is contained in $(\PPD)_g$, which proves the theorem. 

It remains to consider the case where $Z$ also contains, in addition to the valuation ring $F$,  
 valuation rings $V_1,\ldots,V_n$  that are not centered on the maximal ideal ${\ff m}$ of $D$. 
Let $P_1,\ldots,P_n$ be the homogeneous prime ideals of $S$ that are the images under $\phi$ of $V_1,\ldots,V_n$, respectively.   
Let $I = {\ff m}S$. Since no $V_i$  dominates $D$, then since $\phi$ is a morphism of locally ringed spaces,  $I \not \subseteq {P}_i$ for all $i =1,\ldots,n$. We may assume  $P_{1},\ldots,P_{k}$  are the prime ideals that are maximal in the set $\{P_1,\ldots,P_n\}$. 
Then by Lemma~\ref{PA}, there exists $e>0$ such that  $I_{de} \not \subseteq \bigcup_{i=1}^k(P_{i}+g^e)$. Let $h$ be a homogeneous element in $I_{de} \smallsetminus \bigcup_{i=1}^k(P_{i}+g^e).$ 
Since  $P_1,\ldots,P_k$  are maximal in $\{P_1,\ldots,P_n\}$, it follows that $h \in I_{de} \smallsetminus \bigcup_{i=1}^n(P_{i}+g^e).$
  Set $f = h-{g}^e$. Then
   $f \not \in P_i$ for all $i$. 
     In particular, $f \ne 0$, and hence
      $f$  is homogeneous of degree $de$. 
Since $f \not \in P_1 \cup \cdots \cup P_n$, then $P_1,\ldots,P_n \in (\PPD)_f$. 
  
  Finally we show that every closed point of $\PPD$ distinct from $x$ is in $(\PPD)_f$. Let $L$ be a prime ideal in Proj$(S)$ corresponding to a closed point distinct from $x$. Then $L \ne Q$, and to finish the proof, we need only show that $f \not \in L$.    
  As noted above, ${\ff m} \subseteq L$, so if $f \in L$,  then since $h \in {\ff m}S$, we have $g^e \in L$. But then $Q = ({\ff m},g)S \subseteq L$, forcing $Q = L$ since $Q$ is maximal in Proj$(S)$. This contradiction implies that $f \not \in L$, and hence every closed point of $\PPD$ distinct from $x$ is in $(\PPD)_f$, which completes the proof.   
\end{proof}

\begin{remark}  {\em 
 When the valuation rings in $Z$ do not dominate $D$,
the theorem  can still  be applied  if there exists $Y \subseteq \X$ containing $F$ such that (a) each valuation ring in $Y$ other than $F$  dominates $D$, (b) each valuation ring in $Z$ specializes to a valuation ring in $Y$,  and (c) no $D$-morphism $\phi:Y \rightarrow \PPD$ has every closed point in its image. For by the theorem  the holomorphy ring of $Y$ is a  Pr\"ufer domain with torsion Picard group and quotient field $F$.  As an overring of the holomorphy ring of $Y$, the holomorphy ring  of $Z$ has these same properties also.  }
\end{remark}

The following corollary shows how the theorem can be used to prove that real holomorphy rings can be intersected with finitely many non-dominating valuation rings and the result remains a Pr\"ufer domain with quotient field $F$.  In general an intersection of a Pr\"ufer domain and a valuation domain need not be a Pr\"ufer domain. For example, when $D$ is a two-dimensional local Noetherian UFD with quotient field $F$ and $f$ is an irreducible element of $D$, then $D_f$ is a PID and $D_{(f)}$ is a valuation ring, but $D= D_f \cap D_{(f)}$, so that the intersection is not Pr\"ufer. This example can be modified to show more generally that for this choice of $D$, there exist quasicompact schemes in $\X$ that are not affine. 
%

\begin{corollary} Suppose $D$ is essentially of finite type over a real-closed field and that $F$ and  the residue field of $D$  are formally real. Let $H$ be the real holomorphy ring of $F/D$. Then 
for any 
 valuation rings $V_1,\ldots,V_n \in \X$ not dominating $D$,  the ring  $H \cap V_1 \cap \cdots \cap V_n $ is a Pr\"ufer domain with torsion Picard group and quotient field $F$.
\end{corollary}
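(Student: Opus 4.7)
The plan is to realize $H \cap V_1 \cap \cdots \cap V_n$ as the holomorphy ring of an appropriate subspace of the Zariski--Riemann space and then apply Theorem~\ref{P1 theorem} with a carefully chosen base. Rather than taking the base to be $D$ (which need not be integrally closed in $F$, and over which there may be many valuation rings failing to dominate), I will apply Theorem~\ref{P1 theorem} with $D$ replaced by the real-closed field $k$ over which $D$ is essentially of finite type. The field $k$ is trivially local; since $F$ is formally real and $k$ is real-closed, the only algebraic extension of $k$ inside $F$ is $k$ itself, so $k$ is integrally closed in $F$. Moreover, every valuation ring of $F$ containing $k$ is automatically centered at the zero ideal of $k$, so the hypothesis that only finitely many valuation rings in $Z$ fail to dominate $k$ is vacuously satisfied.

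Let $\mathcal{R}$ be the set of valuation rings of $F/D$ whose residue field is formally real, so that $H = \bigcap_{V \in \mathcal{R}} V$ by the standard description of the real holomorphy ring. Set $Z = \mathcal{R} \cup \{V_1, \ldots, V_n, F\}$; since $F \in Z$, the space $Z$ is an irreducible subspace of the Zariski--Riemann space $\X$ of $F/k$, and its holomorphy ring is exactly $H \cap V_1 \cap \cdots \cap V_n$.

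The heart of the argument is then verifying that no $k$-morphism $\phi : Z \to {\mathbb{P}}^1_k$ has every closed point of ${\mathbb{P}}^1_k$ in its image. Because $k$ is real-closed, each closed point of ${\mathbb{P}}^1_k$ has residue field either $k$ or $k(\sqrt{-1})$; call the latter the \emph{quadratic} closed points. If $V \in \mathcal{R}$, then $\phi(V)$ cannot be a quadratic closed point, since the residue field of $\phi(V)$ embeds into $V/{\ff m}_V$ and this would force $-1$ to be a square in $V/{\ff m}_V$, contradicting formal reality of the residue field of $V$. The points $\phi(V_1), \ldots, \phi(V_n)$ contribute at most $n$ closed points to the image, and $\phi(F)$ is the generic point. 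Since $k$ is infinite, ${\mathbb{P}}^1_k$ carries infinitely many quadratic closed points, so at least one of them is missed by the image of $\phi$.

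The main subtlety I expect lies in pinning down the description $H = \bigcap_{V \in \mathcal{R}} V$ from the formal-reality hypotheses on $F$ and on the residue field of $D$, and in confirming that the base change from $D$ to $k$ really places us in the setting of Theorem~\ref{P1 theorem}; once these are in place, Theorem~\ref{P1 theorem} directly yields that the holomorphy ring $H \cap V_1 \cap \cdots \cap V_n$ of $Z$ is a Pr\"ufer domain with torsion Picard group and quotient field $F$.
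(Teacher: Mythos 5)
Your proposal is correct, but it reaches Theorem~\ref{P1 theorem} by a genuinely different route than the paper. The paper keeps the base ring $D$: it first invokes a specialization result (Kuhlmann) to replace the full collection of residually formally real valuation rings by those that \emph{dominate} $D$, so that $F, V_1,\ldots,V_n$ are the only non-dominating members of $Z$; it then observes that every dominating formally real valuation ring lands in $({\mathbb{P}}^1_D)_f$ with $f = T_0^2+T_1^2$ (using that the residue field of $D$ is formally real, so that $({\ff m},f)$ is a genuine closed point), while the $V_i$, being non-dominating, never hit closed points at all --- hence that one explicit closed point is missed. You instead change base to the real-closed field $k$, which is local and (since $F$ is formally real) integrally closed in $F$, and over which the dominance hypothesis of Theorem~\ref{P1 theorem} is vacuous; you then find a missed closed point by counting: the residue-field injection $\kappa(\phi(V))\hookrightarrow V/{\ff m}_V$ rules out all quadratic closed points for $V$ residually formally real, so the image of $\phi$ contains at most $n+1$ of the infinitely many quadratic closed points of ${\mathbb{P}}^1_k$. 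Your version buys something concrete: it needs neither the specialization lemma nor any hypothesis on the residue field of $D$ (nor that $D$ itself be integrally closed, which Theorem~\ref{P1 theorem} formally requires of its base and which the corollary never asserts of $D$), at the cost of a non-constructive pigeonhole step where the paper exhibits a single explicit polynomial. Both arguments are complete; just make sure you state explicitly that $Z$ lies in the Zariski--Riemann space of $F/k$ (every member contains $D\supseteq k$) so that the theorem applies verbatim with $k$ as the base.
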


\begin{proof} Each formally real valuation ring in $\X$ specializes to a formally real valuation ring dominating $D$ (this can be deduced, for example, from \cite[Theorem 23]{Kuh}). Let $Y$ be the set of all the formally real valuation rings dominating $D$,  let $Z = Y \cup \{F,V_1,\ldots,V_n\}$, and let  $\phi:Z \rightarrow \PPD$ be a $D$-morphism.  
Then the image of $Y$ under $\phi$ is contained in $(\PPD)_f$, 
where  $f(T_0,T_1) = T_0^2 + T_1^2$. 
Because $V_1,\ldots,V_n$ do not dominate $D$, they are not mapped by $\phi$ to closed points of $\PPD$.  
Thus the corollary follows from 
Theorem~\ref{P1 theorem}.
 \end{proof}

We include the last corollary as more of a curiosity than an application.  Suppose that $D$ has quotient field $F$. A valuation ring $V$ in $\X$ {\it admits local uniformization} if there exists a projective model $X$ of $F/D$ such that $V$ dominates a regular local ring in $X$. Thus if $\Spec(D)$ has a resolution of singularities, then every valuation ring in $\X$ admits local uniformization. When $D$ is essentially of finite type over a field $k$ of characteristic $0$, then $D$ has a resolution of singularities by the theorem of Hironaka, but when $k$ has positive characteristic, it is not known in general whether local uniformization holds in dimension greater than $3$; see for example \cite{Cut} and \cite{Tem2}. 

\begin{corollary} Suppose that $D$ is a quasi-excellent  integrally closed  local Noetherian domain with quotient field $F$. If there exists a valuation ring in $\X$ that dominates $D$ but does not admit local uniformization, and $Y$ consists of all such valuation rings, 
 then the holomorphy ring  of $Y$ is  a Pr\"ufer domain with torsion Picard group. 
\end{corollary}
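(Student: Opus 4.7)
The plan is to apply Theorem~\ref{P1 theorem} to $Z = Y \cup \{F\}$, whose holomorphy ring coincides with that of $Y$. Every element of $Y$ dominates $D$ by assumption, while $F$ does not, since $D$ is a proper local subring of $F$; thus $F$ is the unique element of $Z$ failing to dominate $D$, so together with the hypotheses on $D$ the first two conditions of Theorem~\ref{P1 theorem} are met. It therefore suffices to show that no $D$-morphism $\phi:Z\to\PPD$ has every closed point of $\PPD$ in its image.

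Fix such a $\phi$. By Proposition~\ref{factors}, $\phi = \gamma\circ\delta$, where $\delta:Z\to X$ is the domination morphism into the projective model $X$ of $F/D$ defined by some $t_0,t_1\in F$, and $\gamma:X\to\PPD$ is the closed immersion of Remark~\ref{closed immersion remark}. Let $X'\to X$ be the normalization; by the quasi-excellence of $D$ this morphism is finite, so its affine pieces are finitely generated $D$-algebras in $F$, and $X'$ is itself a projective model of $F/D$. Writing $\delta':Z\to X'$ for the corresponding domination morphism, the crucial observation is that for every $V\in Y$ the local ring $\delta'(V)$ cannot be regular: otherwise $V$ would dominate a regular local ring in the projective model $X'$, witnessing local uniformization for $V$ and contradicting $V\in Y$. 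Thus $\delta'(Y) \subseteq \mathrm{Sing}(X')$, and since $X'$ is normal and quasi-excellent, $\mathrm{Sing}(X')$ is a closed subset of codimension at least $2$ in $X'$.

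Every closed point of $\PPD$ lies in the closed fiber $\mathbb{P}^1_{\Bbbk}$ over the maximal ideal ${\ff m}$ of $D$, with $\Bbbk = D/{\ff m}$, because the structure morphism $\pi:\PPD\to\Spec D$ is proper and $D$ is local. If the closed fiber of $X$ over ${\ff m}$ is $0$-dimensional, then $\gamma(X)\cap\mathbb{P}^1_{\Bbbk}$ is finite, and all but finitely many closed points of $\mathbb{P}^1_{\Bbbk}$ lie outside the image of $\phi$. Otherwise the closed fiber of $X$ (and hence of $X'$) meets $\mathbb{P}^1_{\Bbbk}$ in a curve equal to $\mathbb{P}^1_{\Bbbk}$. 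The hard step will be this latter case: one must show that the codimension-two singular locus of $X'$ cannot exhaust $X'_{\ff m}$, so that some closed point of $X'_{\ff m}$ lies in the regular locus of $X'$. Such a point's image in $\PPD$ is then a closed point outside $\phi(Z)$, since any valuation ring of $F$ dominating it would admit local uniformization via $X'$ and so could not belong to $Y$. The Pr\"ufer and torsion-Picard conclusions then follow from Theorem~\ref{P1 theorem}.
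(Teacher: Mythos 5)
Your overall strategy is the paper's: pass to $Z = Y \cup \{F\}$, check the hypotheses of Theorem~\ref{P1 theorem}, factor $\phi$ through a projective model $X$ via Proposition~\ref{factors}, and use the fact that a valuation ring admitting no local uniformization must center on a singular point. But your execution diverges at exactly the point where the work happens, and it does not close. The detour through the normalization $X'$ is both unnecessary and the source of the trouble. It is unnecessary because each $V \in Y$ already fails to dominate a regular local ring of $X$ itself (local uniformization quantifies over \emph{all} projective models, so $X$ would be a witness), and because quasi-excellence of $D$ makes the finite-type scheme $X$ quasi-excellent, so $\mathrm{Sing}(X)$ is already closed in $X$ and is proper since the local ring at the generic point is a field. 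The paper works directly with $X$ on this basis. Your normalization step instead leads you to the codimension-two estimate and then to the sentence ``one must show that the codimension-two singular locus of $X'$ cannot exhaust $X'_{\ff m}$,'' which you announce as the hard step and do not prove. That step is precisely the production of a closed point of $\PPD$ outside $\phi(Z)$, i.e., the entire content of the corollary beyond the formal reductions; leaving it as a statement of what ``must be shown'' leaves the proof incomplete.

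There is also a flaw in your final inference even granting that step. If $x'$ is a regular closed point of $X'$ lying over $x \in X$, a valuation ring $V$ dominating $\OO_{X,x}$ dominates \emph{some} point of the finite fiber of $X' \rightarrow X$ over $x$, and that point need not be $x'$; it could be a different, singular point of the fiber. So regularity of one preimage $x'$ does not show that every valuation ring mapping to $\gamma(x)$ under $\phi$ admits local uniformization, and hence does not show $\gamma(x) \notin \phi(Z)$. To repair this you would need a closed point of $X$ whose entire normalization fiber is regular (e.g., a point outside the closed image of $\mathrm{Sing}(X')$ under the finite normalization map), or, much more simply, a regular closed point of $X$ itself, which is what the paper's argument uses: $\delta(Y) \subseteq \mathrm{Sing}(X)$, a proper closed subset, so some closed point of $X$, and hence of $\PPD$ under the closed immersion $\gamma$, avoids the image of $\phi$.
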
 

\begin{proof} Let $Z = Y \cup \{F\}$, and let $\phi:Z \rightarrow \PPD$ be a $D$-morphism. Then by Proposition~\ref{factors}, $\phi$ factors through   a  projective model $X$ of $F/D$. Since $Y$ is nonempty, the projective model $X$ has a singularity, and thus since $D$ is quasi-excellent, the singular points of $X$ are contained in a proper nonempty closed subset of $X$. In particular, there are closed points of $X$ that are not in the image of the  domination map $Z \rightarrow X$, and hence there are closed points of $\PPD$ that are not in the image of $\phi$. Therefore, by Theorem~\ref{P1 theorem}, $A$ is a Pr\"ufer domain with torsion Picard group and quotient field $F$. 
\end{proof}

In particular, all the valuation rings that dominate $D$ and do not admit local uniformization lie in an affine scheme in $\X$.

\end{document}